\theoremstyle{plain}
\newtheorem{thm}{Theorem}[section]
\newtheorem{lem}[thm]{Lemma}
\newtheorem{prop}[thm]{Proposition}
\newtheorem{cor}[thm]{Corollary}
\theoremstyle{definition}
\newtheorem{defn}[thm]{Definition}
\theoremstyle{remark}
\newcommand{\Z}{\mathbb{Z}}
\newcommand{\Q}{\mathbb{Q}}
\newcommand{\C}{\mathbb{C}}
\newcommand{\OO}{\mathcal{O}}
\newcommand{\X}{\bar{X}}
\newcommand{\e}{\hat{\varepsilon}}
\newcommand{\dn}{\text{dn}}
\newcommand{\beg}{\text{beg}}
\newcommand{\eend}{\text{end}}
\begin{document}

\renewcommand{\abstractname}{\vspace{-\baselineskip}}

\title{An explicit determination of the $K$-theoretic structure constants of the affine Grassmannian associated to $SL_2$}

\author{Seth Baldwin}

\maketitle

\begin{abstract}
Let $G:=\widehat{SL_2}$ denote the affine Kac-Moody group associated to $SL_2$ and $\bar{\mathcal{X}}$ the associated affine Grassmannian.  We determine an inductive formula for the Schubert basis structure constants in the torus-equivariant Grothendieck group of $\bar{\mathcal{X}}$.  In the case of ordinary (non-equivariant) $K$-theory we find an explicit closed form for the structure constants.  We also determine an inductive formula for the structure constants in the torus-equivariant cohomology ring, and use this formula to find closed forms for some of the structure constants.  
\end{abstract}

\section{Introduction}

Let $G:=\widehat{SL_2}$ be the affine Kac-Moody group associated to $SL_2$, completed along the negative roots.  Let $P$ be the standard maximal parabolic subgroup and $\bar{\mathcal{X}}:=G/P$ the thick affine Grassmannian.  Let $H$ denote the standard maximal torus, and let $T:=H/Z(G)$ where $Z(G)$ denotes the center of $G$.  Then the natural left action of $H$ on $\bar{\mathcal{X}}$ descends to an action of $T$ on $\bar{\mathcal{X}}$.  Let $R(T)$ denote the representation ring of $T$, and let $W$ denote the affine Weyl group.

\subsection{Equivariant $K$-theory} Let $K^0_T(\bar{\mathcal{X}})$ denote the Grothendieck group of $T$-equivariant coherent sheaves on $\bar{\mathcal{X}}$.  Then the structure sheaves of the opposite (finite codimension) Schubert varieties in $\bar{\mathcal{X}}$ form a `basis' (where infinite sums are allowed) of $K^0_T(\bar{\mathcal{X}})$.  More precisely, we have $$K^0_T(\bar{\mathcal{X}})=\prod_{k\in \Z_{\geq 0}} R(T) \hat{\OO}^k,$$
where $\hat{\OO}^k$ denotes the class of the structure sheaf of the (unique) opposite Schubert variety of codimension $k$.
The structure constants $d^k_{n,m}\in R(T)$ are defined by 

\begin{equation}
\label{strucSL2intro}\hat{\OO}^n\cdot\hat{\OO}^m=\sum_{k\geq n,m} d^k_{n,m}\hat{\OO}^k.
\end{equation}

Using a Chevalley formula due to Lenart-Shimozono \cite[Corollary 3.7]{LeSh}, phrased in the Lakshmibai-Seshadri path model, we explicitly compute the structure constants $d^k_{1,m}$ corresponding to multiplication by the Schubert divisor $\hat{\OO}^1$ (see (\ref{dk1m})).  Next, the structure constants $d^m_{n,m}$ (where $n\leq m$) are computed (see (\ref{dmnm})) using a result of Lam-Schilling-Shimozono on the localizations of Schubert varieties \cite[Proposition 2.10]{LSS}.  Then, using the associative law in the $K$-group, we derive an inductive formula (Proposition \ref{inductprop}) for the structure constants using $d^m_{n,m}$ and $d^k_{1,m}$ as our base cases.

Let $\hat{\xi}^k$ denote the ideal sheaf `basis' (where we allow infinite sums) dual to the basis $\hat{\OO}^k$ (see \S 4.1).  We define the structure constants $b^k_{n,m}\in R(T)$ in the basis $\hat{\xi}^k$ by 
\begin{equation}
\label{strucSL2introxi}\hat{\xi}^n\cdot\hat{\xi}^m=\sum_{k\geq n,m} b^k_{n,m}\hat{\xi}^k.
\end{equation}
Then we have (see (\ref{bd})) $$b^k_{n,m}=\sum_{j\leq k} \left(d^j_{n,m}-d^j_{n+1,m}-d^j_{n,m+1}+d^j_{n+1,m+1}\right).$$  Thus, in principle, the $b^k_{n,m}$ can be computed from the $d^k_{n,m}$.  

\subsection{Ordinary $K$-theory} Let $K^0(\bar{\mathcal{X}})$ denote the Grothendieck group of coherent sheaves on $\bar{\mathcal{X}}$.  Let $\OO^k$ denote the class of the structure sheaf of the (unique) opposite Schubert variety of codimension $k$.  Then, we have $$K^0(\bar{\mathcal{X}})=\prod_{k\in \Z_{\geq 0}}\Z \OO^k.$$  

Further, the structure constants in $K^0(\bar{\mathcal{X}})$ are given by evaluating the $T$-equivariant structure constants at $1$.  Thus we denote the structure constants in the basis $\OO^k$ in ordinary $K$-theory by $d^k_{n,m}(1)$, so that we have, in $K^0(\bar{\mathcal{X}})$, $$\OO^n\cdot\OO^m=\sum_{k\geq n+m} d^k_{n,m}(1)\OO^k.$$
Similarly, letting $\xi^k$ denote the ideal sheaf `basis' (where we allow infinite sums) dual to the basis $\OO^k$, we denote the structure constants in the basis $\xi^k$ in ordinary $K$-theory by $b^k_{n,m}(1)$, so that we have, in $K^0(\bar{\mathcal{X}})$, $$\xi^n\cdot\xi^m=\sum_{k\geq n+m} b^k_{n,m}(1)\xi^k.$$

Then the following theorem, which we prove using our inductive formula, gives a closed form for the structure constants in ordinary $K$-theory (see Theorem \ref{dknmordthm} and Corollary \ref{bcor}):

\begin{thm}
The structure constants in ordinary $K$-theory are given by 
$$d^{n+m+k}_{n,m}(1)=(-1)^k\cdot\frac{(n+m+k-1)!}{(n-1)!(m-1)!k!}\cdot\frac{n+m+2k}{(n+k)(m+k)}.$$
$$b^{n+m+k}_{n,m}(1)=(-1)^k\cdot\frac{(n+m+k)!}{n!m!k!}.$$
\end{thm}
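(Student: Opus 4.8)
The plan is to prove both closed forms by induction, leveraging the inductive formula (Proposition \ref{inductprop}) together with the explicit base cases for $d^k_{1,m}$ and $d^m_{n,m}$ provided in the excerpt. First I would specialize everything to ordinary (non-equivariant) $K$-theory by evaluating the equivariant structure constants at $1$, so that the $d^k_{n,m}(1)$ become rational numbers and the inductive recursion becomes a purely numerical recurrence. The strategy is to verify that the proposed closed form
\[
d^{n+m+k}_{n,m}(1)=(-1)^k\cdot\frac{(n+m+k-1)!}{(n-1)!(m-1)!\,k!}\cdot\frac{n+m+2k}{(n+k)(m+k)}
\]
satisfies both the base cases and the recursion: substitute the formula into the induction of Proposition \ref{inductprop} and check that the identity holds, reducing the whole problem to a finite collection of binomial/factorial identities.

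Concretely, I would first check the two families of base cases. Setting $n=1$ should reproduce the specialization of $d^k_{1,m}$ from (\ref{dk1m}), and the diagonal case $k=0$ (i.e.\ $d^m_{n,m}(1)$) should reproduce the specialization of (\ref{dmnm}); both are direct substitutions into the closed form, and verifying them amounts to recognizing the resulting binomial coefficients. Next I would carry out the induction on (say) $n+m$ or on the appropriate index dictated by how Proposition \ref{inductprop} expresses $d^k_{n,m}$ in terms of structure constants with smaller indices. Substituting the candidate formula for all the lower terms, the recursion becomes an algebraic identity among products of factorials and the linear factors $(n+k)$, $(m+k)$, $(n+m+2k)$; I expect this to collapse after clearing denominators into a standard hypergeometric-type summation or a telescoping identity that can be checked directly.

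Once the formula for $d^{n+m+k}_{n,m}(1)$ is established, the formula for $b^{n+m+k}_{n,m}(1)$ follows from the relation (\ref{bd}) between the two bases, namely
\[
b^k_{n,m}=\sum_{j\leq k}\left(d^j_{n,m}-d^j_{n+1,m}-d^j_{n,m+1}+d^j_{n+1,m+1}\right),
\]
evaluated at $1$. Substituting the now-known closed form for each of the four $d$-terms and performing the partial sum over $j$, I would expect massive cancellation: the alternating second-difference structure in $(n,m)$ together with the summation over $j$ should telescope, leaving only the single clean term $(-1)^k\binom{n+m+k}{n,m,k}=(-1)^k\frac{(n+m+k)!}{n!\,m!\,k!}$. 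The main obstacle will be the induction step for $d^{n+m+k}_{n,m}(1)$: verifying that the candidate formula satisfies the recursion of Proposition \ref{inductprop} is likely to require a nontrivial factorial/binomial identity (possibly provable via the Chu--Vandermonde identity or by a generating-function argument), and organizing the bookkeeping so that the rational factor $\frac{n+m+2k}{(n+k)(m+k)}$ transforms correctly under the recursion is where the real work lies. The transition from $d$ to $b$, by contrast, should be routine once the telescoping is set up.
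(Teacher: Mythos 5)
Your overall strategy (verify the closed form against base cases and an associativity recursion, then get $b$ from $d$ via the second-difference relation (\ref{bd})) is the right shape, and the $b$-part of your plan matches the paper's Corollary \ref{bcor}. But there is a genuine gap in the main step: you propose to substitute the candidate formula into Proposition \ref{inductprop} after evaluating at $1$, and that recursion degenerates there. The denominator in (\ref{induct}) is $d^{n}_{1,n}-d^{k}_{1,k}$, and by (\ref{dk1m}) each of $d^{n}_{1,n}=1-e^{q_n}$ and $d^{k}_{1,k}=1-e^{q_k}$ evaluates to $0$ at $1$. More fundamentally, in ordinary $K$-theory the support condition sharpens to $d^j_{u,v}(1)=0$ unless $j\geq u+v$, so in the associativity relation (\ref{ell}) for $\hat{\OO}^1\cdot\hat{\OO}^{n}\cdot\hat{\OO}^m$ the unknown $d^k_{n,m}(1)$ drops out entirely after evaluation at $1$: it would have to enter through $d^k_{n,m}(1)d^k_{1,k}(1)$ on one side and $d^n_{1,n}(1)d^k_{n,m}(1)$ on the other, and both coefficients vanish. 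The paper repairs this by shifting the index and comparing $\OO^1\cdot(\OO^{n-1}\cdot\OO^m)$ with $(\OO^1\cdot\OO^{n-1})\cdot\OO^m$, which produces the recursion (\ref{inductord}) whose leading coefficient is $d^n_{1,n-1}(1)=n\neq 0$. Without this re-derivation your induction has no engine.

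A second, related slip is in your identification of the base cases. The specialization of (\ref{dmnm}) at $1$ is not a usable base case: every factor $(e^{\beta_{j}}-1)$ vanishes at $1$, so $d^m_{n,m}(1)=0$ for $n\geq 1$, consistent with the support $k\geq n+m$ but carrying no information about the actual ``diagonal'' value $d^{n+m}_{n,m}(1)$. The latter equals $\binom{n+m}{n}$ and is itself obtained from the shifted recursion (\ref{inductord}) at $k=0$ together with (\ref{d1ord}); it is not a direct substitution into an already-known equivariant formula. Once these two points are fixed, the remaining work is as you anticipate: plugging the closed form into (\ref{inductord}) reduces to a single-sum identity, which the paper settles not by Chu--Vandermonde but by exhibiting the summand's antisymmetry $f(s,k)=-f(k-s,k)$; and the passage to $b^{n+m+k}_{n,m}(1)$ via (\ref{bd}) telescopes exactly as you describe.
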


\subsection{Equivariant cohomology} Let $G^{\min}$ denote the minimal affine Kac-Moody group associated to $SL_2$.  Let $P$ be the standard maximal parabolic subgroup and let $\mathcal{X}:=G^{\min}/P$ denote the standard affine Grassmannian.  

Then the $T$-equivariant cohomology ring, $H_T^\bullet(\mathcal{X})$, has a Schubert basis (see \cite[Theorem 11.3.9]{Kbook}), which we denote by $\{\e_i\}_{i=0}^\infty$. Let $\mathbb{Z}[\alpha_0,\alpha_1]$ denote the graded ring of polynomials with integral coefficients in the simple roots $\alpha_0$ and $\alpha_1$.  Further, let $\mathbb{Z}[\alpha_0,\alpha_1](k)$ denote the $k$-th graded piece of $\mathbb{Z}[\alpha_0,\alpha_1]$.  

We define the $T$-equivariant cohomology structure constants $c^k_{n,m}\in \mathbb{Z}[\alpha_0,\alpha_1](n+m-k)$ (see \cite[Corollary 11.3.17]{Kbook}) by
\begin{equation}\label{cknmintro}
\e_n\cdot\e_m=\sum_{k=\max\{n,m\}}^{n+m} c^k_{n,m}\e_k.
\end{equation}
We derive an inductive formula for the structure constants in $T$-equivarient cohomology (see Proposition \ref{scindprop}).  Using this formula, we derive closed forms for $c^{n+m}_{n,m}$, $c^{n+m-1}_{n,m}$, and $c^{n+m-2}_{n,m}$.  The structure constants $c^m_{n,m}$ (where $n\leq m$) are determined by \cite[Lemma 11.1.10 and Proposition 11.1.11 (1) and (3)]{Kbook}.

\subsection{Summary of paper} What follows is a brief summary of the rest of the paper.  In section $2$ we introduce general notation for Kac-Moody groups and their flag varieties.  In section $3$ we summarize the relevant generalities about $K$-theory.  We also state a result due to Lam-Schilling-Shimozono \cite{LSS} on the localizations of Schubert varieties.  Further, we introduce the notion of Lakshmibai-Seshadri path which allows us to state a Chevalley formula due to Lenart-Shimozono \cite{LeSh}.  In section $4$ we specialize to the case of affine $SL_2$ and determine an explicit closed form for the Chevalley coefficients.  Then, we derive an inductive formula for the structure constants in $T$-equivariant $K$-theory.  In section $5$, we use our inductive formula to determine a closed form for the structure constants in ordinary $K$-theory.  Finally in section $6$ we move to the case of $T$-equivariant cohomology, where we derive an inductive formula for the structure constants and determine closed forms for $c^{n+m}_{n,m}$, $c^{n+m-1}_{n,m}$, $c^{n+m-2}_{n,m}$ and when $n\leq m$, for $c^m_{n,m}$.\\

\textbf{Acknowledgements.}  The author would like to thank M. Shimozono for providing a Sage package (discussed in \cite[1.5]{LeSh}) which the author used to verify some computations related to the $K$-theoretic Chevalley formula.  

\section{Notation}

We work over the field $\C$ of complex numbers.  Let $G$ be any symmetrizable Kac-Moody group over $\C$ completed along the negative roots (as opposed to completed along the positive roots as in \cite[Chapter 6]{Kbook}).  Further, let $G^{\text{min}}\subset G$ be the minimal Kac-Moody group as in \cite[\S7.4]{Kbook}.  Let $B$ be the standard Borel subgroup, $B^-$ the standard opposite Borel subgroup, $H:=B\cap B^-$ the standard maximal torus, $T:=H/Z(G^{\min})$ the adjoint torus, where $Z(G^{\min})$ is the center of $G^{\min}$.  Let $W$ denote the Weyl group.  Let $\bar{X}:=G/B$ denote the thick flag variety (introduced by Kashiwara \cite{Ka}), which contains the standard flag variety $X=G^{\text{min}}/B$.  When $G$ is infinite dimensional, $\bar{X}$ is an infinite dimensional non-quasi compact scheme, whereas $X$ is an ind-projective variety \cite[\S7.1]{Kbook}.  The natural left actions of $H$ on $\X$ and $X$ descend to actions of $T$ on $\X$ and $X$.  

For any $w\in W$ we have the Schubert cell $$C_w:=BwB/B\subset X ,$$ the Schubert variety $$X_w:=\overline{C_w}=\bigsqcup_{w'\leq w} C_{w'}\subset X ,$$ the opposite Schubert cell $$C^w:=B^{-}wB/B\subset \X ,$$ and the opposite Schubert variety $$X^w :=\overline{C^w}=\bigsqcup_{w'\geq w} C^{w'}\subset\X ,$$ all endowed with the reduced subscheme structures.  Then, $X_w$ is a (finite dimensional) irreducible projective subvariety of $X$ and $X^w$ is a finite codimensional irreducible subscheme of $\X$ \cite[\S7.1]{Kbook} and \cite[\S4]{Ka}.

Let $R(T)$ denote the representation ring of $T$.  For any integral weight $\lambda$ let $\C_\lambda$ denote the one-dimensional representation of $T$ on $\C$ given by $t\cdot v=\lambda(t)v$ for $t\in T, v\in\C$.  By extending this action to $B$ we may define, for any integral weight $\lambda$, the $G$-equivariant line bundle $\mathcal{L}(\lambda)$ on $\X$ by $$\mathcal{L}(\lambda):=G\times^B \C_{-\lambda},$$ where for any representation $V$ of $B$, $G\times^B V:=(G\times V)/B$ where $B$ acts on $G\times V$ via $b(g,v)=(g b^{-1}, bv)$ for $g\in G, v\in V, b\in B$.  Then $G\times^B V$ is the total space of a $G$-equivariant vector bundle over $X$, with projection given by $(g,v)B\mapsto gB$.  

\section{Grothendieck group and Chevalley formula}

In this section we introduce the $T$-equivariant Grothendieck group of $\X$.  We then explain the relationship between the Grothendieck groups of complete and partial flag varieties, and the relationship between the $T$-equivariant and ordinary Grothendieck groups.  Next, we state a closed form for the localizations in the Schubert basis due to Lam-Schilling-Shimozono \cite{LSS}.  Finally, we introduce the concept of Lakshmibai-Seshadri path, which allows us to state a Chevalley formula due to Lenart-Shimozono \cite{LeSh}.  

\subsection{Grothendieck group}

Let $K^0_T(\X)$ denote the Grothendieck group of $T$-equivariant coherent $\OO_{\X}$-modules.  For any $u\in W$, $\OO_{X^u}$ is a coherent $\OO_{\X}$-module, by \cite[\S2]{KS}.  Since $K^0_T(pt)=R(T)$, we have that $K^0_T(\X)$ is a module over $R(T)$.  Further, from \cite[comment after Remark 2.4]{KS} we have:

\begin{prop}
\label{basis}  
$\{[\OO_{X^u}]\}$ forms a `basis' of $K^0_T(\X)$ as an $R(T)$-module (where we allow infinite sums), i.e., $$K^0_T(\bar{X})=\prod_{w\in W} R(T) [\OO_{X^w}].$$
\end{prop}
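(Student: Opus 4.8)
The statement is quoted from \cite{KS}; the following is how I would reconstruct the argument directly from the geometry of the opposite Schubert stratification. The plan is to exploit the decomposition $\X=\bigsqcup_{w\in W}C^w$ together with the localization long exact sequence in $T$-equivariant $G$-theory, reducing everything to the $G$-theory of affine cells. The two geometric inputs I would use are: (i) each opposite cell $C^w$ is, $T$-equivariantly, an affine space with a linear $T$-action, so that homotopy invariance of equivariant $G$-theory gives $K^0_T(C^w)=R(T)\,[\OO_{C^w}]$, a free rank-one $R(T)$-module on the class of the structure sheaf; and (ii) the Bruhat-order description $X^w=\bigsqcup_{w'\ge w}C^{w'}$, which shows that the boundary of each opposite cell consists of cells of strictly larger codimension.

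First I would set up a codimension filtration. For $n\ge 0$ put $Z_n:=\bigcup_{\ell(w)\ge n}C^w$, where $\ell(w)=\codim C^w$. Using the chain property of the Bruhat order one checks that $Z_n=\bigcup_{\ell(w)=n}X^w$ is closed, that $\X=Z_0\supset Z_1\supset Z_2\supset\cdots$, and that $Z_n\setminus Z_{n+1}=\bigsqcup_{\ell(w)=n}C^w$ is a disjoint union of the \emph{finitely many} opposite cells of codimension $n$. Given a class $[\mathcal F]\in K^0_T(\X)$ I would peel off one codimension at a time. Restricting to the open big cell $C^e=Z_0\setminus Z_1$ and applying (i) writes $[\mathcal F]|_{C^e}=a_e[\OO_{C^e}]$ for a unique $a_e\in R(T)$; by the right-exactness of the localization sequence $K^0_T(Z_1)\to K^0_T(\X)\to K^0_T(C^e)\to 0$, the difference $[\mathcal F]-a_e[\OO_{\X}]$ is the image of a class supported on $Z_1$. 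Iterating on $Z_1$, and at the $n$-th stage restricting to each of the finitely many cells of codimension $n$, produces coefficients $a_w\in R(T)$ for all $w\in W$.

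For uniqueness of these coefficients (equivalently, linear independence in the completed sense) I would recover $a_w$ by restriction to the open cell $C^w$ sitting inside $Z_{\ell(w)}\setminus Z_{\ell(w)+1}$: among the generators of codimension $\ell(w)$, only $[\OO_{X^w}]$ meets $C^w$, since $X^{w'}\cap C^w\ne\varnothing$ forces $w'\le w$ and this is incompatible with $\ell(w')=\ell(w)$ unless $w'=w$; and there $[\OO_{X^w}]$ restricts to $[\OO_{C^w}]$, the free generator of $K^0_T(C^w)=R(T)$. Thus the restriction map reads off $a_w$ unambiguously. Since there are finitely many $w$ of each length, the peeling adds finitely many terms in each codimension, so it determines a well-defined family $(a_w)_w\in\prod_{w\in W}R(T)$, and conversely, because $\{Z_n\}$ is exhausting and locally finite in codimension, any such family yields a well-defined class $\sum_w a_w[\OO_{X^w}]$.

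The main obstacle is categorical rather than combinatorial: because $\X$ is neither finite-dimensional nor quasi-compact, I must justify that $K^0_T(\X)$ really is the full product $\prod_w R(T)$ and not a proper sub- or quotient-module. Concretely, this amounts to showing that the localization sequences for the filtration $\{Z_n\}$ assemble into an isomorphism $K^0_T(\X)\cong\varprojlim_n K^0_T(\X\setminus Z_n)$, with bonding maps given by restriction, and that the coherence of $\OO_{X^w}$ established in \cite{KS} is compatible with passing to this inverse limit (so that the infinite sums genuinely converge in $K^0_T(\X)$). This is precisely the point at which one invokes the coherence and finiteness statements of Kashiwara--Shimozono; once those are in place, the stratification argument above goes through and yields the stated product decomposition.
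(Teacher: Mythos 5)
The paper does not actually prove Proposition \ref{basis}: it is imported verbatim from \cite[comment after Remark 2.4]{KS}, so there is no internal argument to measure your proposal against. That said, your stratification sketch does reproduce the geometric skeleton one would expect to underlie the cited result: the closed sets $Z_n=\bigsqcup_{\ell(w)\ge n}C^w$, the finiteness of each length stratum of $W$, and the triangularity argument showing that restriction to $C^w$ reads off the coefficient $a_w$ (via the observation that $X^{w'}\cap C^w\neq\varnothing$ forces $w'\le w$) are all correct and are exactly the right combinatorial inputs.

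The problem is that the step you yourself label ``the main obstacle'' is not a peripheral technicality to be deferred --- it is essentially the entire content of the statement, and you explicitly outsource it to \cite{KS} rather than prove it. Concretely, three things remain unestablished in your write-up: (i) the identification $K^0_T(C^w)=R(T)\,[\OO_{C^w}]$ --- the opposite cells are \emph{infinite-dimensional} (pro-)affine spaces, not ordinary affine spaces, so this is not the usual homotopy invariance of equivariant $G$-theory and depends on the precise definition of coherent $\OO_{\X}$-module adopted in \cite{KS}; (ii) the right-exactness of the localization sequence $K^0_T(Z_1)\to K^0_T(\X)\to K^0_T(C^e)\to 0$, which for Grothendieck groups of coherent sheaves rests on extending coherent sheaves from open subsets, a nontrivial assertion on a non-noetherian, non-quasi-compact scheme; and (iii) the completeness and separatedness of the support filtration, i.e.\ that $K^0_T(\X)$ is the inverse limit of its quotients by the images of $K^0_T(Z_n)$, without which the peeling procedure produces only a formal series, with no guarantee that distinct classes yield distinct series or that every family $(a_w)_w$ is realized by an actual class. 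Since (i)--(iii) are precisely what \cite{KS} establish, your argument as written is an annotated citation rather than an independent proof; this puts it on the same footing as the paper itself, but it should not be presented as a reconstruction of the proof.
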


The structure constants $d^w_{u,v}\in R(T)$ are defined by 
\begin{equation}
\label{struc}[\OO_{X^u}]\cdot[\OO_{X^v}]=\sum_{w\in W} d^w_{u,v}[\OO_{X^w}].
\end{equation}
Note that for fixed $u,v\in W$, infinitely many of the $d^w_{u,v}$ may be nonzero.  We also have $d^w_{u,v}=0$ unless $w\geq u,v$.  

\subsection{Relation between structure constants for $G/B$ and $G/P$}

Now letting $P$ be any standard parabolic subgroup, we have $$K^0_T(G/P)=\prod_{w\in W^P} R(T) [\OO_{X_P^w}],$$ where $W^P$ denotes the set of minimal length representatives of $W/W_P$, and $X_P^w:=\overline{BwP/P}$.  

We define the structure constants $d^w_{u,v}(P)$ for $G/P$ in the analogous way.

$$[\OO_{X_P^u}]\cdot[\OO_{X_P^v}]=\sum_{w\in W^P} d^w_{u,v}(P)[\OO_{X_P^w}].$$

Let $\pi:G/B\to G/P$ be the standard ($T$-equivariant) projection.  Then, $\pi$ is a locally trivial fibration (with fiber the smooth projective variety $P/B$) and hence flat (see \cite[Chapter 7]{Kbook}).  Thus, we have $$\pi^*[\OO_{X^w_P}]=[\OO_{\pi^{-1}(X^w_P)}]=[\OO_{X^w}].$$  Since $\pi^*:K^0_T(G/P)\to K^0_T(G/B)$ is a ring homomorphism, we have $$d_{u,v}^w=d_{u,v}^w(P)$$ for any $u,v,w\in W^P$.  Thus we henceforth drop the notation $d^w_{u,v}(P)$ in favor of the notation $d^w_{u,v}$, even when working with partial flag varieties.  

\subsection{Relation between structure constants for ordinary and equivariant $K$-theory}

Let $K^0(\X)$ denote the Grothendieck group of coherent sheaves on $\X$.  Then, we have $$K^0(\X)=\prod_{w\in W}\Z \overline{[\OO_{X^w}]},$$ where $\overline{[\OO_{X^w}]}$ denotes the class of $\OO_{X^w}$ in $K^0(\X)$.  Further, the map $$\Z\otimes_{R(T)}K^0_T(\X)\to K^0(\X),\text{ }1\otimes [\OO_{X^w}]\mapsto \overline{[\OO_{X^w}]}$$ is an isomorphism, where we view $\Z$ as an $R(T)$-module via evaluation at $1$.  Similar results apply to $G/P$.  Hence, we have, in $K^0(\X)$, $$\overline{[\OO_{X^u}]}\cdot\overline{[\OO_{X^v}]}=\sum_{w\in W} d^w_{u,v}(1)\overline{[\OO_{X^w}]},$$ and similarly for any parabolic subgroup $P$, we have, in $K^0(G/P)$, 
\begin{equation}\label{ev1}
\overline{[\OO_{X_P^u}]}\cdot\overline{[\OO_{X_P^v}]}=\sum_{w\in W^P} d^w_{u,v}(1)\overline{[\OO_{X_P^w}]}.
\end{equation}
We also have $d^w_{u,v}(1)=0$ unless $w\geq u+v$.  

\subsection{Localizations in the Schubert basis}

We identify the set $\{ wB\}$ of $T$-fixed points of $\bar{X}$ with the Weyl group $W$.  Given $x\in W$, let $i_x: \{x\}\hookrightarrow X$ denote the inclusion map.  Then pullback induces a ring homomorphism $i_x^*:K^0_T(\bar{X})\to K^0_T(\{x\})\cong R(T)$.  For $\psi\in K^0_T(\bar{X})$ and $x\in W$, the localization of $\psi$ at $x$ is defined as $$\psi_x:=i_x^*(\psi).$$  

We are concerned with localizations in the basis $[\OO_{X^w}]$.  The following is \cite[Lemma 2.3]{LSS}:

\begin{lem}[\cite{LSS} Lemma 2.3]
$[\OO_{X^w}]_x=0$ unless $x\geq w$.  
\end{lem}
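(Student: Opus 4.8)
The statement is fundamentally a support argument: the localization $i_x^*$ only sees an arbitrarily small $T$-stable neighborhood of $x$, and $\OO_{X^w}$ is identically zero away from $X^w$, while $X^w$ contains the fixed point $x$ only when $x\geq w$. So the plan is to first pin down exactly which $T$-fixed points lie on $X^w$. By the cell decomposition recalled in \S2, $X^w=\bigsqcup_{w'\geq w}C^{w'}$, and the opposite cell $C^{w'}=B^{-}w'B/B$ contains the single $T$-fixed point $w'B$, which we identify with $w'$. Hence the set of $T$-fixed points of $X^w$ is precisely $\{w'\in W:\ w'\geq w\}$; in particular $x\in X^w$ if and only if $x\geq w$.

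Next I would use that the localization homomorphism $i_x^*\colon K^0_T(\X)\to R(T)$ factors through restriction to any $T$-stable open neighborhood of $x$. Since $X^w$ is a $T$-stable closed subscheme of $\X$, its complement $U:=\X\setminus X^w$ is a $T$-stable open subscheme, and by the previous paragraph $x\in U$ whenever $x\not\geq w$. Because $\OO_{X^w}|_U=0$ as a sheaf, the class $[\OO_{X^w}]$ restricts to $0$ in $K^0_T(U)$; as $i_x$ factors as $\{x\}\hookrightarrow U\hookrightarrow\X$, we obtain $[\OO_{X^w}]_x=i_x^*[\OO_{X^w}]=0$. Equivalently, one can argue locally via the derived pullback along the regular embedding of the point: $i_x^*[\OO_{X^w}]=\sum_i(-1)^i[\Tor_i^{\OO_{\X}}(\OO_{\{x\}},\OO_{X^w})]$, where each term is supported at $x$ with stalk determined by $(\OO_{X^w})_x$. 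When $x\not\geq w$ this stalk vanishes, so every $\Tor$ term vanishes and the localization is $0$.

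The main obstacle is to make these manipulations rigorous on the thick flag variety $\X$, which is an infinite-dimensional, non-quasi-compact scheme rather than a projective variety. I must justify that restriction of $T$-equivariant $K$-theory to the $T$-stable open $U$ is well defined and that $i_x^*$ genuinely factors through it, and I must know that $\OO_{X^w}$ is a coherent $\OO_{\X}$-module so that the support argument applies—this coherence is exactly the input from \cite{KS} recorded in \S3.1. Once the functoriality of restriction in this setting is in hand, for instance from the local structure of $\X$ around its $T$-fixed points, the vanishing follows immediately from the support computation above.
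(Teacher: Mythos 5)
The paper does not actually prove this lemma: it is quoted verbatim from \cite[Lemma 2.3]{LSS}, so there is no in-paper argument to compare against. On its own merits, your proof is correct and is the standard argument one would expect. The two ingredients are both right: the $T$-fixed points of $X^w=\bigsqcup_{w'\geq w}C^{w'}$ are exactly $\{w':w'\geq w\}$, since each opposite cell $C^{w'}=B^-w'B/B$ is a ($T$-stable, pro-affine) orbit with unique fixed point $w'B$; and the localization $i_x^*$ is local at $x$, so it kills any class represented by a sheaf whose support misses $x$, whether you phrase this via restriction to the $T$-stable open complement $U=\X\setminus X^w$ or via the vanishing of all $\Tor_i^{\OO_{\X}}(\OO_{\{x\}},\OO_{X^w})$ when the stalk $(\OO_{X^w})_x$ is zero. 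You are also right to flag the only genuinely delicate point, namely that these functoriality and finiteness statements need justification on the thick flag variety, which is non-quasi-compact and infinite dimensional; the coherence of $\OO_{X^w}$ from \cite{KS}, recorded in \S 3.1, is exactly what makes the pullback well defined and the support argument legitimate there. With that input granted, your proof is complete.
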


\begin{defn}\label{defn*}
For $u,v\in W$, the set $\{xy:x\leq u, y\leq v \}$ has a maximum element, which we denote by $u*v$ (see \cite[Lemma 1.4]{H}).  Then $s_i*s_j=s_is_j$ if $i\neq j$, while $s_i*s_i=s_i$.  
\end{defn}

An explicit closed form for the localizations in the Schubert basis is given by \cite[Proposition 2.10]{LSS} (see also \cite[Theorem 3.12]{G} and \cite[]{W} for the same result in the finite case).
\begin{thm}[\cite{LSS} Proposition 2.10]\label{locthm}
Let $x\geq w\in W$.  Fix a reduced decomposition $x=s_{i_1}s_{i_2}\dots s_{i_m}$.  For $\ell\leq m$, define $\beta_\ell:=s_{i_1}s_{i_2}\dots s_{i_{\ell-1}}\alpha_{i_\ell}$.  Then
$$[\OO_{X^w}]_x=\sum(-1)^{\ell(w)}(e^{\beta_{j_1}}-1)\dots(e^{\beta_{j_p}}-1),$$
where the summation runs over all $1\leq j_1<\dots<j_p\leq m$ such that $w=s_{i_{j_1}}*\dots*s_{i_{j_p}}$.
\end{thm}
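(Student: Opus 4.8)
The plan is to prove the formula by induction on the length $\ell(x)=m$ of the fixed point $x$, peeling the last simple reflection off a reduced word. The starting point is that the localization map $K^0_T(\X)\to\prod_{x\in W}R(T)$, $\psi\mapsto(\psi_x)_x$, is an injective ring homomorphism (the Kostant--Kumar/GKM description of $T$-equivariant $K$-theory), so it suffices to verify the claimed value of $[\OO_{X^w}]_x$ for each pair $(x,w)$ separately. Since the right-hand side is a sum over subwords of a fixed reduced word $x=s_{i_1}\cdots s_{i_m}$, and since deleting the last letter yields the reduced word $x'=s_{i_1}\cdots s_{i_{m-1}}$ with exactly the same roots $\beta_1,\dots,\beta_{m-1}$ and $\beta_m=x'\alpha_{i_m}$, an induction that removes $s_{i_m}$ is the natural bookkeeping device.

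First I would record the combinatorial recursion satisfied by the right-hand side, which I denote $p^w(x)$. Writing $x=x's_i$ with $i=i_m$ and $\ell(x)=\ell(x')+1$, I split the subwords $1\le j_1<\cdots<j_p\le m$ with $s_{i_{j_1}}*\cdots*s_{i_{j_p}}=w$ into those that omit the index $m$ and those that use it. The former are exactly the subwords of $x'$ whose Demazure product is $w$, and contribute $p^w(x')$ by induction. For the latter, setting $w':=s_{i_{j_1}}*\cdots*s_{i_{j_{p-1}}}$, the extra condition is $w'*s_i=w$; by Definition \ref{defn*} this has no solution when $ws_i>w$ and exactly the two solutions $w'\in\{w,ws_i\}$ when $ws_i<w$. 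Tracking the sign $(-1)^{\ell(w)}$ (and using $\ell(ws_i)=\ell(w)-1$ in the second case), this shows
\[
p^w(x's_i)=p^w(x')\qquad\text{if } ws_i>w,
\]
\[
p^w(x's_i)=p^w(x')+(e^{\beta_m}-1)\bigl(p^w(x')-p^{ws_i}(x')\bigr)\qquad\text{if } ws_i<w.
\]

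It then remains to prove that the \emph{geometric} localizations $[\OO_{X^w}]_x$ satisfy this same recursion, which is the crux of the argument. This is a statement about the behaviour of the classes $[\OO_{X^w}]$ under right multiplication by $s_i$ on fixed points, and it is produced by the minimal-parabolic $\PP^1$-fibration $\pi_i\colon G/B\to G/P_i$: the two $T$-fixed points lying on the fibre over $x'P_i$ are exactly $\{x',x's_i\}$, so restricting to fibres reduces everything to $K^0_T(\PP^1)$, and the Demazure operator $D_i:=\pi_i^*\pi_{i*}$ satisfies $D_i[\OO_{X^w}]=[\OO_{X^{\min(w,ws_i)}}]$ because pushing forward and pulling back saturates $X^w$ in the fibre direction. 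Unwinding this relation (equivalently, invoking the $K$-theoretic nilHecke/Demazure-operator formalism of Kostant--Kumar, which \cite{LSS} use directly) yields precisely the displayed recursion. I expect this to be the main obstacle: one must set up the operators with the correct normalization, verify the action $D_i[\OO_{X^w}]=[\OO_{X^{\min(w,ws_i)}}]$ on the structure-sheaf basis, and carefully match exponent and sign conventions so that the factor comes out as $e^{\beta_m}-1$ with $\beta_m=x'\alpha_{i_m}$ rather than, say, $1-e^{-\beta_m}$.

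Finally I would assemble the induction. The base case $m=0$ forces $w\le e$, hence $w=e$, and both $p^w(x)$ and $[\OO_{X^w}]_x$ equal $1$ (the empty product, with $[\OO_{X^e}]=[\OO_{\X}]=1$); the inductive step is the comparison of the two recursions, which agree term by term, so $p^w(x)=[\OO_{X^w}]_x$ for all $x,w$. As a consistency check, the vanishing $[\OO_{X^w}]_x=0$ for $x\not\ge w$ is automatic from the formula, since no subword of a reduced word for $x$ has Demazure product equal to a $w\not\le x$, matching the empty sum. The whole scheme is the $K$-theoretic analogue of the AJS--Billey formula in equivariant cohomology, with reduced subwords replaced by arbitrary subwords whose Demazure product is $w$ and with each $\beta$ replaced by $e^{\beta}-1$.
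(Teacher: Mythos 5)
The paper does not prove this statement: it is imported verbatim from \cite[Proposition 2.10]{LSS} (with \cite{G} and \cite{W} cited for the finite-dimensional case), so there is no in-paper argument to compare yours against. Judged on its own, your strategy is the standard one for results of AJS--Billey type and is essentially how the cited sources proceed: show both sides satisfy the same recursion under stripping the last letter off a reduced word. Your combinatorial half is correct and complete. Writing $u*s_i=\max\{u,us_i\}$, your case analysis of $w'*s_i=w$ (no solutions when $ws_i>w$; exactly $w'\in\{w,ws_i\}$ when $ws_i<w$) is right, the sign bookkeeping via $\ell(ws_i)=\ell(w)-1$ is right, and the base case $x=e$, $w=e$ is handled. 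The induction also correctly covers all reduced words of all $x$, and the geometric side is independent of the choice, so there is no well-definedness issue.

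The genuine gap is the geometric half of the recursion. For $ws_i<w$ you need, with $\beta_m=x'\alpha_{i_m}$,
\begin{equation*}
[\OO_{X^w}]_{x's_i}=e^{\beta_m}[\OO_{X^w}]_{x'}-\left(e^{\beta_m}-1\right)[\OO_{X^{ws_i}}]_{x'},
\end{equation*}
and this is asserted rather than derived. It does not follow formally from ``$D_i$ saturates $X^w$ in the fibre direction'': you must (a) justify $D_i[\OO_{X^w}]=[\OO_{X^{\min(w,ws_i)}}]$ for the opposite (finite-codimension) Schubert schemes in the \emph{thick} flag variety, which is a nontrivial input (it is where \cite{KS} and the Demazure-operator formalism of \cite{KoKu} enter), and (b) invert the fixed-point formula for $\pi_{i*}$ along the fibre $\PP^1$ with the correct tangent weights $\mp x'\alpha_{i_m}$ at $x'$ and $x's_i$ and the correct $\lambda_{-1}$-denominators. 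Step (b) is exactly where a naive computation produces $1-e^{-\beta_m}$ instead of $-(e^{\beta_m}-1)$; the two differ by a unit, and only one is consistent with the normalization $[\OO_{X^{s_i}}]_{s_i}=1-e^{\alpha_i}$ forced by $[\OO_{X^{s_i}}]=1-e^{\Lambda_i}[\mathcal{L}(\Lambda_i)]$ and $[\mathcal{L}(\lambda)]_x=e^{-x\lambda}$. You flag this yourself as the main obstacle, but as written the crux of the proof is a pointer to the literature rather than an argument, so the proposal is a correct outline with its central step left unestablished.
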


Localizing the equation defining the structure constants (\ref{struc}) at $x$ gives $$[\OO_{X^u}]_x\cdot[\OO_{X^v}]_x=\sum_{w\in W} d^w_{u,v}[\OO_{X^w}]_x.$$  Now if $u\leq v$, then since $[\OO_{X^w}]_x=0$ unless $x\geq w$, and $d^w_{u,v}=0$ unless $w\geq u$ and $w\geq v$, letting $x=v$ gives $[\OO_{X^u}]_v\cdot[\OO_{X^v}]_v=d^v_{u,v}[\OO_{X^v}]_v,$ which reduces to 
\begin{equation}\label{locdvuv}
[\OO_{X^u}]_v=d^v_{u,v},\,\,\,\text{ for }u\leq v.
\end{equation}

\subsection{Lakshmibai-Seshadri paths}

In this subsection we introduce the notion of Lakshmibai-Seshadri paths.  We do not attempt to give this subject a proper treatment, but instead introduce only the notions necessary to understand the statement of the Chevalley formula in the subsequent subsection.  

Let $S=\{s_i:i\in I\}$ denote the set of simple reflections of $W$.  For any $J\subset I$, define $W_J$ to be the Weyl group generated by the $s_j$ where $j\in J$.  Let $\lambda$ be a dominant integral weight.  Then its stabilizer $W_\lambda$ is the parabolic subgroup $W_J$ with $J = \{i \in I : s_i \lambda = \lambda\}$.

We define the Bruhat ordering on the orbit $W\lambda$ of $\lambda$ by taking the transitive closure of the relations $s_\alpha \sigma < \sigma$ iff $\langle \sigma, \alpha^\vee\rangle>0$, where $\alpha$ is a positive root and $\sigma\in W\lambda$.  Note that by this convention, for $u,v\in W^\lambda$, we have $u < v$ iff $v\lambda < u \lambda$ (where $W^\lambda$ denotes the set of minimal length representatives on $W/W_\lambda$).  For a real number $b$, we define the $b$-Bruhat ordering `$<_b$' on $W\lambda$ by defining $\mu$ to cover $\nu$ in the $b$-Bruhat order iff $\mu$ covers $\nu$ in the normal Bruhat order and $b(\mu-\nu)$ is an integer multiple of a root.   

\begin{defn}[Lakshmibai-Seshadri path \cite{St}]\label{LSpathdef}
A Lakshmibai-Seshadri (LS) path $p$ of shape $\lambda$ is a pair $p=(\sigma, b)$ where $$\sigma: \sigma_1 > \sigma_2 >\dots > \sigma_m,\text{ }\sigma_i\in W/W_\lambda$$ $$b: 0=b_0<b_1<\dots<b_m=1,\text{ }b_i\in\Q.$$  We also require that $$\sigma_1 \lambda <_{b_1} \sigma_2 \lambda <_{b_2} \dots <_{b_{m-1}} \sigma_m \lambda.$$  
\end{defn}

Denote by $\mathcal{T}^\lambda$ the set of all LS paths of shape $\lambda$.  For $p\in \mathcal{T}^\lambda$ we define the weight of $p=(\sigma, b)$ to be 
\begin{equation}\label{p(1)}
p(1)=\sigma_m \lambda - \sum_{i=1}^{m-1}b_i(\sigma_{i+1} \lambda - \sigma_i\lambda).
\end{equation} 

\begin{prop}[\cite{LS2}, Lemma 4.4']\label{propdn}
Let $\tau\in W/W_J$ and $w\in W$ be such that $wW_J \geq \tau$ in $W/W_J$.  Then the set
$$\{v\in W: w\geq v, vW_J=\tau\}$$ has a Bruhat-maximum, which will be denoted by $\dn(w,\tau)$. The symbol $\dn$ is an abbreviation for ``down".
\end{prop}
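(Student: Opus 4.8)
The plan is to give a self-contained Coxeter-theoretic proof whose only real input is the lifting property of the Bruhat order. Write $W^J$ for the set of minimal-length representatives of $W/W_J$, let $\tau_0\in W^J$ be the minimal-length representative of $\tau$, and set $M_w(\tau):=\{v\in W: v\le w,\ vW_J=\tau\}$. First I would dispatch nonemptiness: the hypothesis $wW_J\ge\tau$ says, by definition of the Bruhat order on $W/W_J$, that $\tau_0$ is $\le$ the minimal representative of $wW_J$, which in turn is $\le w$; hence $\tau_0\in M_w(\tau)$. Next I would record that $M_w(\tau)\subseteq[e,w]$ is finite, so that once $M_w(\tau)$ is shown to be up-directed (any two elements admit a common upper bound lying in $M_w(\tau)$) the existence of a unique maximum $\dn(w,\tau)$ is automatic. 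Equivalently, and this is the statement I would actually induct on, it suffices to produce $\max M_w(\tau)$ directly.

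The engine is the lifting property in the form: for a simple reflection $s$ with $sw<w$ and any $v\in W$, one has $v\le w\iff sv\le sw$ when $sv<v$, and $v\le w\iff v\le sw$ when $sv>v$. I would then induct on $\ell(w)$, the base case $w=e$ forcing $\tau=W_J$ and $M_e(\tau)=\{e\}$. For the inductive step pick a simple $s$ with $sw<w$; since left multiplication by $s$ permutes $W/W_J$, the governing dichotomy is whether $s$ stabilizes $\tau$. In the stable case $s\tau=\tau$ one checks that $\tau_0$ (being shortest in its coset) satisfies $s\tau_0>\tau_0$, whence the lifting property gives $\tau_0\le sw$ and the induction hypothesis yields $\mu:=\max M_{sw}(\tau)$. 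I would then prove $\max M_w(\tau)=\mu$ if $s\mu<\mu$ and $\max M_w(\tau)=s\mu$ otherwise: each case reduces to checking that every $v\in M_w(\tau)$ satisfies $v\le\mu$ (resp.\ $v\le s\mu$), which splits according to the sign of the length change under $s$ and is settled by one application of the lifting property.

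The remaining case $s\tau\ne\tau$ is where I expect the main difficulty. Here $s$ carries $\tau$ bijectively onto a distinct coset $\tau_s$, and the lifting property splits $M_w(\tau)$ into the ascending part $\{v\in\tau: sv>v,\ v\le w\}=\{v\in\tau: sv>v,\ v\le sw\}$ and the descending part $\{v\in\tau: sv<v,\ v\le w\}$, the latter carried by $v\mapsto sv$ onto $\{u\in\tau_s: su>u,\ u\le sw\}$. Both are subsets of fibers over $sw$, so one would like to invoke the induction hypothesis; the obstacle is that the hypothesis produces the maximum of the \emph{entire} fiber $M_{sw}(\tau)$ (resp.\ $M_{sw}(\tau_s)$), whereas only the ascent-restricted sub-fibers occur here, and the maximum of a full fiber need not satisfy $sv>v$. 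Overcoming this appears to require either strengthening the inductive statement so as to also track maxima of the ascent-restricted fibers, or invoking the structural lemma that in the non-stable case the minimal representatives satisfy $s\tau_0=(\tau_s)_0$ with $\ell(s\tau_0)=\ell(\tau_0)+1$, and using it to show the two candidate maxima are comparable (differing by $s$, exactly as the pair $\mu,s\mu$ in the stable case). This coset-representative bookkeeping in the non-stable case is the one genuinely non-formal ingredient; everything else is repeated application of the lifting property.
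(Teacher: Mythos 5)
First, a remark on the comparison itself: the paper offers no proof of this proposition --- it is quoted verbatim from \cite[Lemma 4.4$'$]{LS2} --- so your argument can only be judged on its own terms. Your setup (nonemptiness via $\tau_0\le w$, finiteness of $[e,w]$, induction on $\ell(w)$ with a simple $s$ satisfying $sw<w$) is the standard route, and your treatment of the stable case $s\tau=\tau$ is correct: $\tau_0$ is indeed an ascent for $s$, the lifting property puts $\tau_0\le sw$ (hence $swW_J\ge\tau$, using that $u\le x$ iff $u\le\min(xW_J)$ for $u\in W^J$), and the dichotomy $\max M_w(\tau)\in\{\mu,s\mu\}$ checks out by the applications of lifting you describe.

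The non-stable case, however, is left genuinely open in your write-up, and as stated your decomposition into ascent-restricted sub-fibers would not close under the induction hypothesis --- you are right to flag this. But the difficulty is illusory, and the missing ingredient is not a strengthened inductive statement or a comparability argument between two candidate maxima; it is the length-additivity of the parabolic factorization. By Deodhar's trichotomy, $s\tau\ne\tau$ forces $s\tau_0\in W^J$. Writing any $v\in\tau$ as $v=\tau_0x$ with $x\in W_J$ and $\ell(v)=\ell(\tau_0)+\ell(x)$, one gets $sv=(s\tau_0)x$ with $\ell(sv)=\ell(s\tau_0)+\ell(x)$, so $\ell(sv)-\ell(v)=\ell(s\tau_0)-\ell(\tau_0)=\pm1$ \emph{uniformly over the whole coset} $\tau$. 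Consequently exactly one of your two parts is empty: if $s\tau_0>\tau_0$ then every $v\in\tau$ is an ascent for $s$ and $M_w(\tau)=M_{sw}(\tau)$, so the induction hypothesis applies verbatim; if $s\tau_0<\tau_0$ then every $v\in\tau$ is a descent, $v\mapsto sv$ is a bijection of $M_w(\tau)$ onto the full fiber $M_{sw}(s\tau)$, and $\max M_w(\tau)=s\cdot\max M_{sw}(s\tau)$ by one more application of lifting. With this observation inserted, the non-stable case is actually the easier of the two, and your proof is complete.
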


For $p\in\mathcal{T}^\lambda$, we define, with notation as in Definition \ref{LSpathdef},  $$\beg(p)=\sigma_1\text{ and } \eend(p)=\sigma_m.$$  
Then for $w \in W$ such that $\beg(p) \leq wW_\lambda$, define $\dn(w,p)$ by 
\begin{equation}\label{downdef}
w=w_0\geq w_1\geq \dots\geq w_m=\dn(w,p),
\end{equation}
where $w_i:=\dn(w_{i-1},\sigma_i)$ for $i$ from $1$ to $m$.  Here $\dn(w_{i-1},\sigma_i)$ is defined as in Proposition \ref{propdn}.  

For $z,w\in W$ we define 
\begin{equation}\label{D}
\mathcal{D}^\lambda_{w,z}:=\{p\in \mathcal{T}^\lambda:\beg(p)\leq wW_\lambda,\dn(w,p)=z\}.
\end{equation}

\subsection{Chevalley formula}

For any integral weight $\lambda$, define the Chevalley coefficients $a^w_v(\lambda)\in R(T)$ by 
\begin{equation}
\label{cheva}[\mathcal{L}(\lambda)]\cdot[\OO_{X^v}]=\sum_{w\geq v}a^w_v(\lambda)[\OO_{X^w}].
\end{equation}
Now letting $\lambda$ be a dominant integral weight, we have the following Chevalley formula from \cite{LeSh} (note that $\mathcal{L}(\lambda)$ in our notation is written $\mathcal{L}^{-\lambda}$ in the notation of \cite{LeSh}):
\begin{thm}[\cite{LeSh}, Corollary 3.7]
For any dominant integral weight $\lambda$, we have
$$[\mathcal{L}(\lambda)]\cdot[\OO_{X^v}]=\sum_{w\geq v}\sum_{p\in \mathcal{D}^{\lambda}_{w,v}}(-1)^{\ell(w)-\ell(v)} e^{-p(1)}[\OO_{X^w}],$$
where $\mathcal{D}^{\lambda}_{w,z}$ is defined in (\ref{D}) and $p(1)$ is defined in (\ref{p(1)}).  
\end{thm}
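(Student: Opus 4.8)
The plan is to prove the formula one $T$-fixed point at a time, exploiting the triangularity of the localization map from Theorem~\ref{locthm} to reduce the statement to a combinatorial identity, and then to handle a general dominant $\lambda$ by induction using the multiplicativity $[\mathcal{L}(\lambda)]\cdot[\mathcal{L}(\mu)]=[\mathcal{L}(\lambda+\mu)]$ together with the concatenation structure of the Lakshmibai-Seshadri path model. The first point is that the coefficients $a^w_v(\lambda)$ of (\ref{cheva}) are \emph{determined} by localization: since $\{[\OO_{X^w}]\}$ is a basis and, by Theorem~\ref{locthm}, $[\OO_{X^w}]_x=0$ unless $x\geq w$ while $[\OO_{X^w}]_w=(-1)^{\ell(w)}\prod_{\ell=1}^{\ell(w)}(e^{\beta_\ell}-1)\neq 0$, the coefficients can be solved for recursively up the Bruhat order. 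Using the fiber computation $[\mathcal{L}(\lambda)]_x=e^{-x\lambda}$ (matching the convention $\mathcal{L}(\lambda)=G\times^B\C_{-\lambda}$) and localizing (\ref{cheva}) at any $x\geq v$ gives the finite triangular system
\[
e^{-x\lambda}\,[\OO_{X^v}]_x=\sum_{v\leq w\leq x}a^w_v(\lambda)\,[\OO_{X^w}]_x,
\]
whose leading term $w=x$ has a coefficient invertible in the fraction field of $R(T)$. Hence it suffices to verify that the proposed sum over $\mathcal{D}^\lambda_{w,v}$ satisfies this system for every $x$.

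For the base case I would take $\lambda$ a fundamental weight, so that $\mathcal{L}(\lambda)$ generates the Picard group and the LS paths of shape $\lambda$ are indexed by short chains in $W/W_\lambda$ with only the coarsest admissible rational subdivisions; here the sets $\mathcal{D}^\lambda_{w,v}$ and the weights $p(1)$ of (\ref{p(1)}) can be computed explicitly and checked against the divisor localization coming from Theorem~\ref{locthm}. For the inductive step I would use $[\mathcal{L}(\lambda+\mu)]=[\mathcal{L}(\lambda)]\cdot[\mathcal{L}(\mu)]$: applying the rule first for $\mu$ and then for $\lambda$ produces a double sum over pairs $(p,q)$ with $p\in\mathcal{T}^\mu$ and $q\in\mathcal{T}^\lambda$, and the aim is to collapse this, via path concatenation $(p,q)\mapsto p\ast q\in\mathcal{T}^{\lambda+\mu}$, onto the single sum over $\mathcal{T}^{\lambda+\mu}$. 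What makes this plausible is that $\beg$, $\eend$, the weight $p(1)$, and the iterated operator $\dn(w,-)$ of (\ref{downdef}) all behave compatibly under concatenation, so that the two successive applications of the rule reproduce $\mathcal{D}^{\lambda+\mu}_{w,v}$ with the signs $(-1)^{\ell(w)-\ell(v)}$ telescoping across the intermediate Weyl-group element.

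The main obstacle is exactly this combinatorial associativity: one must show that $\dn(w,p\ast q)$ factors through the intermediate element $\dn(w,\cdot)$ in the way forced by (\ref{downdef}), and that the admissibility conditions of Definition~\ref{LSpathdef} for shape $\lambda+\mu$ match up precisely with the compatible pairs of paths of shapes $\lambda$ and $\mu$ — this is the step that genuinely invokes the integrable representation theory underlying the path model (Littelmann's character formula and the concatenation/tensor-product rule), as opposed to formal $K$-theory. I expect the careful bookkeeping of the rational division points $b_i$ under concatenation, and the verification that the collapsed double sum produces no spurious cancellation among the terms $e^{-p(1)}$, to be the most delicate point; once this identity is established, the remaining verification that the LS-path expression solves the triangular localization system is routine.
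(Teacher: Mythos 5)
First, a point of calibration: the paper does not prove this statement at all --- it is imported verbatim from Lenart--Shimozono \cite[Corollary 3.7]{LeSh} and used as a black box, so there is no internal proof to compare against. Judged on its own terms, your reduction via localization is sound and standard: restriction to the $T$-fixed points is injective, Theorem \ref{locthm} gives the upper-triangularity $[\OO_{X^w}]_x=0$ unless $x\geq w$ with invertible diagonal entries $(-1)^{\ell(w)}\prod_\ell(e^{\beta_\ell}-1)$ over the fraction field of $R(T)$, and $[\mathcal{L}(\lambda)]_x=e^{-x\lambda}$ in the convention $\mathcal{L}(\lambda)=G\times^B\C_{-\lambda}$. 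So it would indeed suffice to verify that the LS-path expression solves the resulting triangular system. But that verification is the entire content of the theorem, and the two steps you propose for it both have genuine gaps.

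The base case is misdescribed: for a fundamental weight of an affine Kac--Moody group --- which is exactly the case this paper needs, $\lambda=\Lambda_0$ for $\widehat{SL_2}$ --- the LS paths of shape $\lambda$ are \emph{not} ``short chains with only the coarsest admissible subdivisions.'' The paper's own Lemma \ref{LSlem} exhibits LS paths of shape $\Lambda_0$ with chains $w_\ell>w_{\ell-1}>\dots>w_m$ of unbounded length and $\binom{\ell-1}{m-1}$ choices of division points, so the base case already carries essentially all of the combinatorial difficulty. More seriously, the inductive step via $[\mathcal{L}(\lambda+\mu)]=[\mathcal{L}(\lambda)]\cdot[\mathcal{L}(\mu)]$ cannot work as a bijective collapse: by Littelmann's theory, concatenations of paths in $\mathcal{T}^\lambda$ with paths in $\mathcal{T}^\mu$ model the full tensor product $V(\lambda)\otimes V(\mu)$, of which $V(\lambda+\mu)$ is only one irreducible constituent. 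A concatenation $p\ast q$ is generally not an LS path of shape $\lambda+\mu$ (the $b$-Bruhat chain condition fails at the junction), and not every element of $\mathcal{T}^{\lambda+\mu}$ arises this way; the double sum can only match the single sum after large-scale cancellation among the terms $(-1)^{\ell(w)-\ell(v)}e^{-p(1)}$, which is precisely what you would need to prove and is asserted rather than argued (likewise the claimed factorization of $\dn(w,p\ast q)$ through the intermediate $\dn$). This is why \cite{LeSh} proves the formula by entirely different means (Demazure-operator recursions in the $K$-nilHecke algebra and the alcove model) rather than by tensoring line bundles. As it stands, the proposal reduces the theorem to an unproved combinatorial identity that is at least as hard as the theorem itself.
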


This immediately gives
\begin{equation}\label{a}
a^w_v(\lambda)=\sum_{p\in \mathcal{D}^{\lambda}_{w,v}}(-1)^{\ell(w)-\ell(v)} e^{-p(1)}.
\end{equation}
Further, for any simple reflection $s_i$, we have $[\OO_{X^{s_i}}]=1-e^{\Lambda_i}[\mathcal{L}(\Lambda_i)]$, where $\Lambda_i$ denotes the $i$-th fundamental weight.  Hence,
\begin{equation}\label{da}
d^w_{s_i,v}= 
\begin{cases} 
      -e^{\Lambda_i}a^w_v(\Lambda_i) & w\neq v \\
      1-e^{\Lambda_i}a^v_v(\Lambda_i) & w=v.
\end{cases}
\end{equation}

\section{Structure constants for $\widehat{SL_2}$ in $T$-equivariant $K$-theory}

In this section we specialize to the case of $\widehat{SL_2}$.  We begin by explicitly determine the Chevalley coefficients $a^w_v(\Lambda_0)$ (see (\ref{a})) for the affine Grassmannian associated to $SL_2$, where $\Lambda_0$ is the zeroth fundamental weight.  We then determine an inductive formula for the structure constants.  

\subsection{Notation for $\widehat{SL_2}$}

Let $G:=\widehat{SL_2}$ be the affine Kac-Moody group associated to $SL_2$, completed along the negative roots.  Let $P$ be the standard maximal parabolic subgroup and $\bar{\mathcal{X}}:=G/P$ the thick affine Grassmannian.  Let $T$ denote the standard maximal torus, $R(T)$ its representation ring, and $W$ the affine Weyl group.

Let $\mathfrak{g}:=\widehat{\mathfrak{sl}_2}$.  We denote by $\mathfrak{h}\subset\mathfrak{g}$ the Cartan subalgebra, and by $\mathfrak{h}^*$ its dual.  Then we have the simple roots $\alpha_0, \alpha_1\in\mathfrak{h}^*$, the simple coroots $\alpha_0^\vee,\alpha_1^\vee\in\mathfrak{h}$, the simple reflections $s_0,s_1\in W$, and the fundamental weights $\Lambda_0,\Lambda_1\in\mathfrak{h}^*$.  Note that $W$ is isomorphic to the free product $\langle s_0\rangle *\langle s_1\rangle$.  

Let $W^P$ denote the set of minimal length representatives of $W/W_P$, where $W_P$ denotes the Weyl group of $P$.  Then $W^P=\{w_n\}_{n=0}^\infty$ where $$w_n:=\dots s_0s_1s_0$$ denotes the word of length $n$ in $s_0$ and $s_1$ which is alternating and ends in $s_0$ (so $w_0=e, w_1=s_0, w_2=s_1 s_0, w_3=s_0 s_1 s_0$, etc).  Further, $W/W_P$ is totally ordered under the relative Bruhat ordering.  

We denote by $\hat{\OO}^k:=[\OO_{X_P^{w_k}}]$.  Then we have: $$K^0_T(\bar{\mathcal{X}})=\prod_{k\in \Z_{\geq 0}} R(T) \hat{\OO}^k.$$

We henceforth denote the structure constants $d^{w_k}_{w_n,w_m}$ (see (\ref{struc})) and the Chevalley coefficients $a^{w_k}_{w_m}(\Lambda_0)$ (see (\ref{cheva})) by $d^k_{n,m}$ and $a^k_m$ respectively.  Then, in this notation, we have, in $K^0_T(\bar{\mathcal{X}})$, 
\begin{equation}
\label{strucSL2}\hat{\OO}^n\cdot\hat{\OO}^m=\sum_{k\geq n,m} d^k_{n,m}\hat{\OO}^k.
\end{equation}
Further, we have, by (\ref{a}),

\begin{equation}\label{aSL2}
a^k_m=\sum_{p\in \mathcal{D}^{\Lambda_0}_{w_k,w_m}}(-1)^{k+m} e^{-p(1)}.
\end{equation}

We also consider the basis dual to the basis $\hat{\OO}^k$, defined as follows.  Let $$\partial X_P^{w_k}:=X_P^{w_k}\setminus C_P^{w_k}$$ be given the reduced subscheme strcture, where $C_P^{w_k}:=B^-w_kP/P$.  Let $$\hat{\xi}^k:=[\OO_{X_P^{w_k}}(-\partial X_P^{w_k})]$$ denote the class of the ideal sheaf of $\partial X_P^{w_k}$ in $X_P^{w_k}$.  Then $\{\hat{\xi}^k\}_{k=0}^{\infty}$ forms an $R(T)$-`basis' of $K^0(\bar{\mathcal{X}})$, where we allow infinite sums.  Further, we have 
\begin{equation}\label{xiO}
\hat{\xi}^k=\hat{\OO}^k-\hat{\OO}^{k+1}.
\end{equation} 

We define the structure constants $b^k_{n,m}\in R(T)$ in the basis $\hat{\xi}^k$ by 
\begin{equation}\label{scxi}
\hat{\xi}^n\cdot\hat{\xi}^m=\sum_{k\geq n,m}b^k_{n,m}\hat{\xi}^k.
\end{equation}
Using (\ref{xiO}) and looking at the coefficient of $\hat{\OO}^k$ in the product $\hat{\xi}^n\cdot\hat{\xi}^m$ we obtain
$$b^k_{n,m}-b^{k-1}_{n,m}=d^k_{n,m}-d^k_{n+1,m}-d^k_{n,m+1}+d^k_{n+1,m+1}.$$
Thus by induction, we have the following expression for the $b^k_{n,m}$ in terms of the $d^k_{n,m}$ 
\begin{equation}\label{bd}
b^k_{n,m}=\sum_{j\leq k} \left(d^j_{n,m}-d^j_{n+1,m}-d^j_{n,m+1}+d^j_{n+1,m+1}\right).
\end{equation}

\subsection{Determination of $a^k_m$}

We begin by determining the Chevalley coefficients $a^k_m$.  By (\ref{aSL2}), this can be done by determining the set $\mathcal{D}^{\Lambda_0}_{w_k,w_m}$ and the weights of the associated paths.  Once the $a^k_m$ are known, we obtain $d^k_{1,m}$ using (\ref{da}).  

The following is a restatement of \cite[Lemma 1]{S}, although we provide a proof as our conventions are different.

\begin{lem}\label{LSlem}
The LS paths of shape $\Lambda_0$ are those paths $p=(\sigma, b)$ such that $$\sigma : w_{\ell}> w_{\ell-1}> \dots > w_m$$ $$b: 0 < b_{\ell} < b_{\ell-1} < \dots < b_{m+1} < 1,$$ where $\ell \geq m$ and where $b_{j}\cdot j\in \Z$ for all $m+1\leq j\leq \ell$.
\end{lem}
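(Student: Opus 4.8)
The plan is to first pin down the combinatorics of the orbit $W\Lambda_0$, and then translate the chain and $b$-Bruhat conditions of Definition \ref{LSpathdef} into the explicit arithmetic constraints asserted in the lemma. First I would identify the stabilizer of $\Lambda_0$. Since $\langle\Lambda_0,\alpha_0^\vee\rangle=1$ and $\langle\Lambda_0,\alpha_1^\vee\rangle=0$, we have $s_1\Lambda_0=\Lambda_0$ and $s_0\Lambda_0\neq\Lambda_0$, so $W_{\Lambda_0}=\langle s_1\rangle=W_P$. Hence $W^{\Lambda_0}=W^P=\{w_n\}_{n\geq 0}$, and because $W/W_P$ is totally ordered its only covering relations are $w_{n-1}\lessdot w_n$. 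Under the convention $u<v\iff v\Lambda_0<u\Lambda_0$, the induced total order on the orbit is $w_0\Lambda_0>w_1\Lambda_0>\cdots$, so an admissible chain $\sigma_1>\sigma_2>\cdots$ (equivalently $\sigma_1\Lambda_0<\sigma_2\Lambda_0<\cdots$) is a strictly decreasing sequence of the $w_n$.

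Next I would compute the weight jump across a single cover. Writing $w_n=s_{i(n)}w_{n-1}$ with $i(n)=0$ for $n$ odd and $i(n)=1$ for $n$ even, the reflection formula gives $w_{n-1}\Lambda_0-w_n\Lambda_0=c_n\alpha_{i(n)}$ with $c_n:=\langle w_{n-1}\Lambda_0,\alpha_{i(n)}^\vee\rangle$. Using that the $\alpha_i$ alternate and that $\langle\alpha_0,\alpha_1^\vee\rangle=\langle\alpha_1,\alpha_0^\vee\rangle=-2$, a short computation expressing $c_{n+1}$ in terms of $w_{n-1}\Lambda_0$ and $w_{n-2}\Lambda_0$ yields the recursion $c_{n+1}=2c_n-c_{n-1}$ with $c_1=1$, $c_2=2$, hence $c_n=n$. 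Therefore each cover contributes the jump $w_{n-1}\Lambda_0-w_n\Lambda_0=n\,\alpha_{i(n)}$, a positive integer multiple of a single simple root.

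With these in hand, I would analyze the conditions $\sigma_i\Lambda_0<_{b_i}\sigma_{i+1}\Lambda_0$. Since the order on the orbit is total, any $b_i$-chain from $\sigma_i\Lambda_0$ up to $\sigma_{i+1}\Lambda_0$ must pass through every intermediate weight, and at each cover $w_t\Lambda_0\lessdot w_{t-1}\Lambda_0$ the defining condition demands that $b_i(w_{t-1}\Lambda_0-w_t\Lambda_0)=b_i\,t\,\alpha_{i(t)}$ be an integral multiple of a root. As the only roots proportional to a simple root $\alpha_i$ are $\pm\alpha_i$, this is equivalent to $b_i t\in\Z$. I expect the crucial step to be ruling out skips: if a corner skipped more than one cover, then two consecutive integers $t,t+1$ would both satisfy $b_i t,\,b_i(t+1)\in\Z$, forcing $b_i\in\Z$ and contradicting $0<b_i<1$. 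Hence every chain is consecutive, $\sigma: w_\ell>w_{\ell-1}>\cdots>w_m$, with exactly one break per cover.

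Finally, relabelling the break across $w_j\Lambda_0\lessdot w_{j-1}\Lambda_0$ as $b_j$ turns the increasing interior sequence into $0<b_\ell<b_{\ell-1}<\cdots<b_{m+1}<1$, and the integrality condition at that cover reads precisely $b_j\cdot j\in\Z$ for $m+1\leq j\leq\ell$. This is exactly the asserted description; conversely, any pair $(\sigma,b)$ of this form manifestly satisfies Definition \ref{LSpathdef}, since each step is a genuine cover and $b_j\cdot j\in\Z$ guarantees the $b_j$-Bruhat relation. The main obstacles are thus bookkeeping ones: correctly tracking the reversal of the order on the orbit, and matching the interior breaks to the cover indices; the no-skip coprimality argument is the one genuinely structural point.
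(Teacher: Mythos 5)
Your proposal is correct and follows essentially the same route as the paper: compute the weight jump $w_{j-1}\Lambda_0-w_j\Lambda_0=j\alpha_{i(j)}$ across each cover, observe that $b\,j$ and $b\,(j+1)$ cannot both be integers for $0<b<1$ to rule out skips, and read off the integrality condition $b_j\cdot j\in\Z$. The only cosmetic difference is that you derive the coefficients $c_n=n$ by the recursion $c_{n+1}=2c_n-c_{n-1}$ rather than from the closed form for $w_i\Lambda_0$ in (\ref{womega}); you also make explicit the (correct) point that the only roots proportional to a simple root are $\pm$ that root, which the paper leaves implicit.
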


\begin{proof}
It can be easily checked that all such paths are LS paths.  

To complete the proof, we must show that all LS paths are of this form, i.e. we must show that $\sigma$ can have no `skips', and that the $b_j$'s must satisfy the stated condition.

One may compute that 
\begin{equation}
\label{womega}
w_i\Lambda_0=
\begin{cases} 
      \Lambda_0-j^2\alpha_0-(j^2+j)\alpha_1 & i=2j \\
      \Lambda_0-(j+1)^2\alpha_0-(j^2+j)\alpha_1 & i=2j+1.
\end{cases}
\end{equation}
It follows that 
\begin{equation}\label{diff}
w_{i-1}\Lambda_0-w_i\Lambda_0=
\begin{cases}
i\alpha_0 & i\text{ odd}\\
i\alpha_1 & i\text{ even}.
\end{cases}
\end{equation}

Recall that by definition, in the $b$-Bruhat order, $\mu$ covers $\nu$ iff $\mu$ covers $\nu$ in the normal Bruhat order and $b(\mu-\nu)$ is an integer multiple of a root.  By (\ref{diff}), for $0< b < 1$, it is not possible that $b(w_i\Lambda_0-w_{i+1}\Lambda_0)$ and $b(w_{i-1}\Lambda_0-w_i\Lambda_0)$ are both integer multiples of a root.  Hence $\sigma$ can have no skips.  The condition on the $b_j$'s also follows from (\ref{diff}).  

\end{proof}

As noted in \cite{S}, the condition on the $b_j$'s can be rephrased $$b:0< \frac{i_{\ell}}{\ell} < \frac{i_{\ell-1}}{\ell-1} < \dots < \frac{i_{m+1}}{m+1} < 1,$$ where $i_{j}\in \Z_{\geq 0}$ for all $m+1\leq j \leq \ell$.  These equalities are equivalent to the requirement that $1\leq i_{\ell}\leq i_{\ell-1}\leq \dots \leq i_{m+1}\leq m$.  

For $\ell\geq m$, let $p(\ell,m)$ denote the set of all LS paths of shape $\Lambda_0$ beginning at $w_\ell$ and ending at $w_m$:  $$p(\ell,m):=\{p\in \mathcal{T}^{\Lambda_0}: \beg(p)=w_\ell, \eend(p)=w_m\}.$$  

For $p\in p(\ell,m)$, the weight (\ref{p(1)}) becomes: $$p(1)=w_m\Lambda_0-\sum_{j=m+1}^\ell b_j(w_{j-1}\Lambda_0-w_j\Lambda_0).$$
Thus, by (\ref{diff}), we have
\begin{equation}\label{p(1)SL2}
p(1)=w_m \Lambda_0-\sum_{\substack{j\text{ odd}\\ \ell\geq j \geq m+1}}i_j\alpha_0 - \sum_{\substack{j\text{ even} \\ \ell\geq j \geq m+1}}i_j\alpha_1.
\end{equation}

Let $k\geq \ell$.  Note that $W_{\Lambda_0}=W_{s_1}$.  It is easy to see that 
\[
\dn(w_k, w_\ell W_{\Lambda_0})=
\begin{cases}
w_\ell & k=\ell\text{ or }\ell+1\\
w_\ell s_1 & k\geq\ell+2,
\end{cases}
\]
and $$\dn(w_k s_1,w_\ell W_{\Lambda_0})=w_\ell s_1,$$ where $\dn(w_k,w_\ell W_{\Lambda_0})$ is defined by Proposition \ref{propdn}.  It follows that for any path $p\in p(\ell,m)$ we have 
\[
\dn(w_k, p)=
\begin{cases}
 w_m & k=\ell\text{ or }\ell+1\\
w_m s_1& k\geq \ell+2,
\end{cases}
\] 
where by $\dn(w_k, p)$ is defined by (\ref{downdef}).  Thus we have 
\begin{equation}\label{Ddisjoint}
\mathcal{D}^{\Lambda_0}_{w_k,w_m}=p(k,m)\bigsqcup p(k-1,m).
\end{equation}

From (\ref{Ddisjoint}) and (\ref{aSL2}) we have $$a^k_m=(-1)^{k+m}\left[\sum_{p\in p(k,m)} e^{-p(1)}+\sum_{p\in p(k-1,m)} e^{-p(1)}\right].$$
Thus by (\ref{da}), (\ref{p(1)SL2}), and (\ref{womega}), we obtain
\begin{equation}\label{dk1m}
d^k_{1,m}=
\begin{cases} 
      \displaystyle (-1)^{k+m+1}e^{q_m}\left[\sum_{\substack{\mathbf{i}=(i_k,\dots,i_{m+1})\in\Z_{\geq 0}^{k-m}\\1\leq i_k\leq\dots\leq i_{m+1}\leq m}} e^{\chi(\mathbf{i})}+\sum_{\substack{\mathbf{j}=(i_{k-1},\dots,i_{m+1})\in\Z_{\geq 0}^{k-1-m}\\1\leq i_{k-1}\leq\dots\leq i_{m+1}\leq m}} e^{\chi(\mathbf{j})}\right] & k > m + 1 \\
      \displaystyle (-1)^{k+m+1}e^{q_m}\left[\sum_{\substack{\mathbf{i}=(i_k,\dots,i_{m+1})\in\Z_{\geq 0}^{k-m}\\1\leq i_k\leq\dots\leq i_{m+1}\leq m}} e^{\chi(\mathbf{i})}+1\right] & k = m+1 \\
      \displaystyle 1-e^{q_m} & k=m,
\end{cases}
\end{equation}
where $$q_m:=\left\lceil \frac{m}{2}\right\rceil^2\alpha_0+\left(\left\lfloor\frac{m}{2}\right\rfloor^2+\left\lfloor\frac{m}{2}\right\rfloor\right)\alpha_1,$$ and $$\chi(i_\ell,\dots,i_{m+1}):={\sum_{\substack{j\text{ odd}\\ \ell\geq j \geq m+1}}i_j\alpha_0 + \sum_{\substack{j\text{ even} \\ \ell\geq j \geq m+1}}i_j\alpha_1}.$$

\subsection{Inductive formula for $d^k_{n,m}$}

In this subsection, we derive an inductive formula for the structure constants $d^k_{n,m}$.   First, we apply Theorem \ref{locthm} to compute the structure constants $d^{m}_{n,m}$ where $n\leq m$.

Let $w_m=s_{i_1}s_{i_2}\dots s_{i_m}$ be a reduced decomposition.  For $\ell\leq m$, define $\beta_\ell:=s_{i_1}s_{i_2}\dots s_{i_{\ell-1}}\alpha_{i_\ell}$
Then we have
\[
\beta_{\ell}=
\begin{cases}
\ell\alpha_0+(\ell-1)\alpha_1 & m\text{ odd}\\
(\ell-1)\alpha_0+\ell\alpha_1 & m\text{ even}.
\end{cases}
\]
Then combining Theorem \ref{locthm} with (\ref{locdvuv}) gives, for $n\leq m$,
\begin{equation}\label{dmnm}
d^m_{n,m}=\sum(-1)^{n}(e^{\beta_{j_1}}-1)\dots(e^{\beta_{j_p}}-1),
\end{equation} where the summation runs over all $1\leq j_1<\dots<j_p\leq m$ such that $w_n=s_{i_{j_1}}*\dots*s_{i_{j_p}}$, and the operation $*$ is defined as in Definition \ref{defn*}.

Now comparing $\hat{\OO}^1\cdot(\hat{\OO}^{n}\cdot \hat{\OO}^m)$ with $(\hat{\OO}^1\cdot\hat{\OO}^{n})\cdot \hat{\OO}^m$ in $K^0_T(\bar{\mathcal{X}})$, we obtain, for any $k$, 
\begin{equation}\label{ell}
\sum_i d^i_{n,m}d^k_{1,i}=\sum_jd^j_{1,n}d^k_{j,m}.
\end{equation}
Let $k> n$.  Then using that $d^j_{n,m}=0$ unless $j\geq \max\{n,m\}$, and solving for $d^{k}_{n,m}$ in (\ref{ell}), we obtain the following inductive relation for the structure constants:

\begin{prop}\label{inductprop}
For $k>n$, the $T$-equivariant structure constants in the basis $\hat{\OO}^k$ satisfy:
\begin{equation}
\label{induct}
d^{k}_{n,m}=\frac{1}{\left(d^{n}_{1,n}-d^{k}_{1,k}\right)}\left[\sum_{i=\max\{n,m\}}^{k-1}d^i_{n,m}d^{k}_{1,i} - \sum_{j=n+1}^{k}d^j_{1,n}d^{k}_{j,m}\right].
\end{equation}
\end{prop}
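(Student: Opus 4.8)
The plan is to derive (\ref{induct}) purely algebraically from the associativity identity (\ref{ell}), using nothing beyond the vanishing conditions on the structure constants together with the fact that $R(T)$ is an integral domain. The whole argument amounts to truncating the two sums in (\ref{ell}) to finite ranges, splitting off the single summand on each side that carries the unknown $d^k_{n,m}$, and solving a linear equation over $R(T)$.

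First I would rewrite (\ref{ell}), namely $\sum_i d^i_{n,m}d^k_{1,i}=\sum_j d^j_{1,n}d^k_{j,m}$, as an identity between two finite sums. Recall that $d^c_{a,b}=0$ unless $c\geq a$ and $c\geq b$. Applied to the left-hand side this kills $d^i_{n,m}$ for $i<\max\{n,m\}$ and $d^k_{1,i}$ for $i>k$, so $i$ runs over $\max\{n,m\}\leq i\leq k$; applied to the right-hand side it forces $n\leq j\leq k$. (Since $\hat{\OO}^n\cdot\hat{\OO}^m$ is genuinely an infinite sum, the point worth noting is that these bounds are exactly what guarantee the coefficient of $\hat{\OO}^k$ on each side is a finite element of $R(T)$.)

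Next I would isolate the unknown. On the left $d^k_{n,m}$ occurs only in the $i=k$ summand, contributing $d^k_{n,m}d^k_{1,k}$; on the right it occurs only in the $j=n$ summand, contributing $d^n_{1,n}d^k_{n,m}$. Splitting these off and collecting them gives
\begin{equation*}
d^k_{n,m}\left(d^n_{1,n}-d^k_{1,k}\right)=\sum_{i=\max\{n,m\}}^{k-1}d^i_{n,m}d^k_{1,i}-\sum_{j=n+1}^{k}d^j_{1,n}d^k_{j,m},
\end{equation*}
which is precisely (\ref{induct}) after dividing by $d^n_{1,n}-d^k_{1,k}$.

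The only genuine content, and the step I expect to be the main (if minor) obstacle, is justifying that this division is legitimate, i.e.\ that $d^n_{1,n}-d^k_{1,k}$ is a non-zero-divisor of $R(T)$. Since $R(T)$ is a Laurent polynomial ring, hence a domain, it suffices to show this element is nonzero. Here I would invoke the $k=m$ case of (\ref{dk1m}), which gives $d^n_{1,n}=1-e^{q_n}$ and $d^k_{1,k}=1-e^{q_k}$, so that $d^n_{1,n}-d^k_{1,k}=e^{q_k}-e^{q_n}$. By (\ref{womega}) the weights $q_\ell=\Lambda_0-w_\ell\Lambda_0$ are pairwise distinct, so for $k>n$ this difference of exponentials is nonzero. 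This is exactly where the hypothesis $k>n$ is used, and it completes the proof.
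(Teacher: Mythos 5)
Your argument is correct and is essentially the same as the paper's: the paper likewise extracts the coefficient of $\hat{\OO}^k$ from the associativity identity (\ref{ell}), truncates both sums using the vanishing of $d^c_{a,b}$ unless $c\geq a,b$, and solves for $d^k_{n,m}$. Your explicit check that $d^n_{1,n}-d^k_{1,k}=e^{q_k}-e^{q_n}$ is a nonzero element of the domain $R(T)$ for $k>n$ is a detail the paper leaves implicit, and it is the right justification for the division.
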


Note that all expressions on the right side of (\ref{induct}) may be assumed to be known by inducting on $k-n$.  The base cases are given by (\ref{dk1m}) and (\ref{dmnm}).  

\section{Structure constants for $\widehat{SL_2}$ in ordinary $K$-theory}

Let $K^0(\bar{\mathcal{X}})$ denote the Grothendieck group of coherent sheaves on $\bar{\mathcal{X}}$.  Denote by $\OO^k:=\overline{[\OO_{X_P^{w_k}}]}$, where as earlier, $P$ is the standard maximal parabolic subgroup.  Then, we have $$K^0(\bar{\mathcal{X}})=\prod_{k\in \Z_{\geq 0}}\Z \OO^k.$$  
Further, by (\ref{ev1}), we have, in $K^0(\bar{\mathcal{X}})$, $$\OO^n\cdot\OO^m=\sum_{k\geq n+m} d^k_{n,m}(1)\OO^k.$$

Consider the set $p(\ell,m)$.  By the results from the preceding section, the number of paths $p\in p(\ell,m)$ is the same as the number of $(\ell-m)$-tuples $(i_\ell,i_{\ell-1},\dots,i_{\ell-m+1})\in \Z_{\geq 0}^{\ell-m}$ satisfying $1\leq i_{\ell}\leq i_{\ell-1}\leq \dots \leq i_{\ell-m+1}\leq m$.  Hence the set $p(\ell,m)$ has cardinality $\binom{\ell-1}{m-1}$.
Thus, we have, by (\ref{Ddisjoint}),
\begin{equation}\label{Dcard}
\left|\mathcal{D}^{\Lambda_0}_{w_k,w_m}\right|=\binom{k-1}{m-1}+\binom{k-2}{m-1}.
\end{equation}

Evaluating (\ref{dk1m}) at $1$, we have, for $k>m$, 
\begin{equation}\label{d1ord}
d^k_{1,m}(1)=(-1)^{k+m+1}\left[\binom{k-1}{m-1}+\binom{k-2}{m-1}\right].
\end{equation}

Comparing $\OO^1\cdot(\OO^{n-1}\cdot \OO^m)$ with $(\OO^1\cdot\OO^{n-1})\cdot \OO^m$, we obtain, for any $\ell\geq 0$, 
\begin{equation}\label{ellord}
\sum_i d^i_{n-1,m}(1)d^\ell_{1,i}(1)=\sum_jd^j_{1,n-1}(1)d^\ell_{j,m}(1).
\end{equation}
Setting $\ell=n+m+k$ in (\ref{ellord}), where $k\geq 0$, using that $d^j_{n,m}(1)=0$ unless $j\geq n+m$, and solving for $d^{n+m+k}_{n,m}(1)$, we obtain the following inductive relation for the structure constants:

\begin{equation}
\label{inductord}
d^{n+m+k}_{n,m}(1)=\frac{1}{n}\left[\sum_{i=n+m-1}^{n+m+k-1}d^i_{n-1,m}(1)d^{n+m+k}_{1,i}(1) - \sum_{j=n+1}^{n+k}d^j_{1,n-1}(1)d^{n+m+k}_{j,m}(1)\right].
\end{equation}
In particular, choosing $k=0$ in (\ref{inductord}), and using (\ref{d1ord}) (with $k=m+1$),  we derive that
\begin{equation}\label{dnmord}
d^{n+m}_{n,m}(1)=\binom{n+m}{n}.
\end{equation}

Note that all expressions on the right side of (\ref{inductord}) may be assumed to be known by inducting on $n$ and $k$ simultaneously.  The base cases are covered by (\ref{d1ord}) and (\ref{dnmord}).  Thus (\ref{inductord}) completely determines the structure constants.  

\begin{thm}\label{dknmordthm}
The structure constants in the basis $\OO^k$ in ordinary $K$-theory are given by 
\begin{equation}\label{dknmord}
d^{n+m+k}_{n,m}(1)=(-1)^k\cdot\frac{(n+m+k-1)!}{(n-1)!(m-1)!k!}\cdot\frac{n+m+2k}{(n+k)(m+k)}.
\end{equation}
\end{thm}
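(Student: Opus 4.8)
The plan is to prove the closed form (\ref{dknmord}) by induction, showing that it satisfies both the base cases and the inductive relation (\ref{inductord}). The relation expresses $d^{n+m+k}_{n,m}(1)$ in terms of the factors $d^i_{n-1,m}(1)$ appearing in the first sum (first index $n-1$, excess ranging from $0$ to $k$), the factors $d^{n+m+k}_{j,m}(1)$ in the second sum (first index $j$ with excess $n+k-j\le k-1$), and the Chevalley coefficients $d^{\bullet}_{1,\bullet}(1)$, which are already known explicitly from (\ref{d1ord}). The terms $d^{n+m+k}_{n,m}(1)$ itself does not reappear on the right-hand side, so the relation genuinely solves for it. Accordingly I would order the induction lexicographically on the pair (excess $k$, first index $n$): every unknown term of excess exactly $k$ occurring on the right has first index $n-1<n$, and all remaining unknown terms have strictly smaller excess, so each right-hand quantity precedes $d^{n+m+k}_{n,m}(1)$ in this order.

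The base cases anchor the two inductions. For $k=0$ the formula (\ref{dknmord}) reduces to $\tfrac{(n+m-1)!}{(n-1)!(m-1)!}\cdot\tfrac{n+m}{nm}=\binom{n+m}{n}$, matching (\ref{dnmord}). For $n=1$ a short binomial manipulation shows that (\ref{dknmord}) equals $(-1)^{k}\big[\binom{m+k}{m-1}+\binom{m+k-1}{m-1}\big]$, which (after checking the sign $(-1)^{k}=(-1)^{(1+m+k)+m+1}$) is exactly (\ref{d1ord}); this $n=1$ verification must be done directly, since (\ref{inductord}) degenerates to a tautology at $n=1$ because $\OO^{0}$ is the identity. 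For the inductive step ($n\ge 2$, $k\ge 1$) I would substitute the closed forms into the right-hand side of (\ref{inductord}). Writing $i=n+m+a-1$ in the first sum (so $a$ runs from $0$ to $k$) and using (\ref{d1ord}) for $d^{n+m+k}_{1,i}(1)$, the signs combine to a global $(-1)^{k}$, and likewise the second sum (indexed by $j$ from $n+1$ to $n+k$) carries the same global $(-1)^{k}$. It is convenient here to use the partial-fraction rewriting $\tfrac{n+m+2k}{(n+k)(m+k)}=\tfrac{1}{n+k}+\tfrac{1}{m+k}$, which splits the target into two pieces reflecting the $\alpha_0/\alpha_1$ symmetry of the computation.

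The main obstacle is the resulting combinatorial identity: after dividing out the common sign $(-1)^{k}$ and the factor $1/n$, one must show that the difference of the two sums collapses to the right-hand side of (\ref{dknmord}). Each summand is a product of binomial coefficients, so each sum is a single-variable terminating hypergeometric series of the type one hopes to evaluate by a classical ${}_3F_2(1)$ summation such as Pfaff--Saalschütz, or, failing a direct evaluation, by Zeilberger's creative-telescoping algorithm. I expect the subtler point to be that the two sums may not admit convenient closed forms in isolation while their difference does, so the cleanest route is to treat the combined summand, establish a contiguous relation for it, and verify that it telescopes to the stated answer; alternatively one evaluates the two hypergeometric sums separately and subtracts. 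Once this identity is established, comparison with (\ref{dknmord}) closes the induction and proves the theorem.
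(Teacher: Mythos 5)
Your overall strategy is the paper's: verify that \eqref{dknmord} reproduces the base cases \eqref{dnmord} and \eqref{d1ord}, then show the closed form satisfies the recurrence \eqref{inductord}, inducting in an order under which every term on the right-hand side is already known. Your bookkeeping here is sound --- the lexicographic order on (excess, first index) does dominate all terms appearing on the right of \eqref{inductord}, your base-case verifications are correct, and you are right that the $n=1$ case must be checked directly against \eqref{d1ord} because \eqref{inductord} degenerates there.

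The gap is that the inductive step is not actually carried out. You reduce the proof to ``the resulting combinatorial identity'' and then only express the hope that it yields to a classical ${}_3F_2(1)$ evaluation such as Pfaff--Saalsch\"utz, or to Zeilberger's algorithm, or to some telescoping contiguous relation; none of these is executed, and your own caveat --- that the two sums need not have closed forms separately --- is precisely why the direct hypergeometric-evaluation route is not straightforward. Since this identity is the entire content of the proof, the argument as written is incomplete. The paper closes it with an elementary observation requiring no summation machinery: move the $j=n$ term (whose coefficient $d^n_{1,n-1}(1)$ equals $n$) to the left so the recurrence becomes the vanishing statement \eqref{inductord2}; substitute the closed forms; reindex by $s=i-n-m+1$ and $t=j-n$ so both sums run over $0\le s\le k$; combine them and strip off the common factor $(-1)^k\,\frac{(n+m+k-1)!}{(n-2)!(m-1)!}$. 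The bracketed difference of fractions then simplifies so that the remaining summand $f(s,k)$ carries an explicit factor $(2s-k)$ and satisfies the reflection antisymmetry $f(s,k)=-f(k-s,k)$, whence $\sum_{s=0}^k f(s,k)=0$ by pairing $s$ with $k-s$. If you prefer to salvage your route, creative telescoping applied to the \emph{combined} summand would certify the same identity, but you would need to actually produce the certificate; the antisymmetry argument is shorter and self-contained.
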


\begin{proof}
It suffices to show that the above formula (\ref{dknmord}) satisfies the inductive relation (\ref{inductord}) and agrees with the known formulas for the base cases $d^k_{1,m}(1)$ (\ref{d1ord}) and $d^{n+m}_{n,m}(1)$ (\ref{dnmord}).

The base case $d^{n+m}_{n,m}(1)$ is immediately verified by letting $k=0$ in (\ref{dknmord}).  For the base case $d^k_{1,m}(1)$, let $\ell=k-1-m$.  Then, according to (\ref{dknmord}), 
\begin{align*}
d^k_{1,m}(1)=d^{1+m+\ell}_{1,m}(1)&=(-1)^\ell \cdot\frac{(m+\ell)!}{(m-1)!\ell!}\cdot\frac{(1+m+2\ell)}{(1+\ell)(m+\ell)}\\
&=(-1)^{k+1+m}\cdot\frac{(k-1)!}{(m-1)!(k-1-m)!}\cdot\frac{2k-1-m}{(k-m)(k-1)}.
\end{align*}
It is straightforward to verify that this is equal to $(-1)^{k+1+m}\left[\binom{k-1}{m-1}+\binom{k-2}{m-1}\right]$, as desired.

Now we must verify the inductive relation (\ref{inductord}).  
We first rewrite (\ref{inductord}) as 
\begin{equation}\label{inductord2}
\sum_{i=n+m-1}^{n+m+k-1}d^i_{n-1,m}(1)d^{n+m+k}_{1,i}(1) - \sum_{j=n}^{n+k}d^j_{1,n-1}(1)d^{n+m+k}_{j,m}(1)=0.
\end{equation}
Now according to (\ref{dknmord}), we have
\begin{align}
d^i_{n-1,m}(1)&=(-1)^{i+n+m+1}\cdot\frac{(i-1)!}{(n-2)!(m-1)!(i-n-m+1)!}\cdot\frac{(2i+1-n-m)}{(i-m)(i-n+1)},\label{d1}\\
d^{n+m+k}_{1,i}(1)&=(-1)^{n+m+k+i+1}\cdot\frac{(n+m+k-1)!}{(i-1)!(n+m+k-i-1)!}\cdot\frac{(2n+2m+2k-1-i)}{(n+m+k-i)(n+m+k-1)},\label{d2}\\
d^j_{1,n-1}(1)&=(-1)^{j+n}\cdot\frac{(j-1)!}{(n-2)!(j-n)!}\cdot\frac{(2j-n)}{(j-n+1)(j-1)},\label{d3}\\
d^{n+m+k}_{j,m}(1)&=(-1)^{n+k+j}\cdot\frac{(n+m+k-1)!}{(j-1)!(m-1)!(n+k-j)!}\cdot\frac{(2k+2n-j+m)}{(n+k)(n+m+k-j)}.\label{d4}
\end{align}

Then, plugging in the above formulas (\ref{d1})-(\ref{d4}), and letting $s=i-n-m+1$ and $t=j-n$, the left hand side of (\ref{inductord2}) becomes $$\sum_{s=0}^{k}(-1)^k\cdot\frac{(2s+n+m-1)(n+m+2k-s)}{(s+n-1)(s+m)(k+1-s)(n+m+k-1)}\cdot\frac{(n+m+k-1)!}{(n-2)!(m-1)!s!(k-s)!}$$ $$- \sum_{t=0}^{k}(-1)^k\cdot\frac{(2t+n)(2k+n+m-t)}{(t+n-1)(t+1)(n+k)(m+k-t)}\cdot\frac{(n+m+k-1)!}{(n-2)!(m-1)!t!(k-t)!}.$$
As we wish to show that this expression is equal to $0$, combing the two sums and dividing by the factor $(-1)^k\cdot\frac{(n+m+k-1)!}{(n-2)!(m-1)!}$ gives 
\begin{equation}\label{lhs}
\sum_{s=0}^{k}\frac{(n+m+2k-s)}{(s+n-1)s!(k-s)!}\left[\frac{2s+n+m-1}{(s+m)(k+1-s)(n+m+k-1)}-\frac{2s-n}{(s+1)(n+k)(m+k-s)}\right].
\end{equation}

We compute that the difference of the two fractions in the brackets in (\ref{lhs}) simplifies to $$\frac{(m-1)(2s-k)(s+n-1)(n+m+k+s)}{(s+m)(k+1-s)(k+m+n-1)(s+1)(n+k)(m+k-s)}.$$
Thus, expression (\ref{lhs}) being equal to zero is equivalent to the equation
\begin{equation}\label{sums=0}
\sum_{s=0}^{k}\frac{1}{s!(k-s)!}\left[\frac{(n+m+2k-s)(2s-k)(n+m+k+s)}{(s+m)(k+1-s)(s+1)(m+k-s)}\right]=0.
\end{equation}

Now letting $$f(s,k):=\frac{1}{s!(k-s)!}\left[\frac{(n+m+2k-s)(2s-k)(n+m+k+s)}{(s+m)(k+1-s)(s+1)(m+k-s)}\right],$$ we see that for $0\leq s \leq \frac{k}{2}$, we have $f(s,k)=-f(k-s,k)$.  This immediately implies (\ref{sums=0}), which in turn verifies (\ref{inductord2}), completing the proof.   

\end{proof}

Now let $\xi^k:=\overline{[\OO_{X_P^{w_k}}(-\partial X_P^{w_k})]}$ denote the class of $\OO_{X_P^{w_k}}(-\partial X_P^{w_k})$ in $K^0(\bar{\mathcal{X}})$.  Then we have, in $K^0(\bar{\mathcal{X}})$, $$\xi^n\cdot\xi^m=\sum_{k\geq n+m}b^k_{n,m}(1)\xi^k,$$ where $b^k_{n,m}$ are defined as in (\ref{scxi}).

From Theorem \ref{dknmordthm} one may compute that 
\begin{equation}\label{dddd}
d^j_{n,m}-d^j_{n+1,m}-d^j_{n,m+1}+d^j_{n+1,m+1}=(-1)^j\cdot\frac{(n+m+j-1)!}{n!m!j!}\cdot(n+m+2j).
\end{equation}
Then (\ref{dddd}) and (\ref{bd}) together with an inductive argument give the following corollary:

\begin{cor}\label{bcor}
The structure constants in the basis $\xi^k$ in ordinary $K$-theory are given by $$b^{n+m+k}_{n,m}(1)=(-1)^k\cdot\frac{(n+m+k)!}{n!m!k!}.$$
\end{cor}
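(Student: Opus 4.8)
The plan is to establish the closed form by induction on $k$ (with $n,m$ fixed), driving the induction with the recurrence $b^k_{n,m}-b^{k-1}_{n,m}=d^k_{n,m}-d^k_{n+1,m}-d^k_{n,m+1}+d^k_{n+1,m+1}$ recorded just before (\ref{bd}). Working in ordinary $K$-theory (i.e.\ after evaluating at $1$) and specializing this relation to codimension $n+m+k$ gives
\[
b^{n+m+k}_{n,m}(1)-b^{n+m+k-1}_{n,m}(1)=d^{n+m+k}_{n,m}(1)-d^{n+m+k}_{n+1,m}(1)-d^{n+m+k}_{n,m+1}(1)+d^{n+m+k}_{n+1,m+1}(1),
\]
and by (\ref{dddd}) the right-hand side equals $(-1)^k\frac{(n+m+k-1)!}{n!m!k!}(n+m+2k)$. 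Thus the numbers $b^{n+m+k}_{n,m}(1)$ obey a first-order recurrence in $k$ with an explicitly known increment, so it remains only to pin down the initial value and solve.

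For the base case $k=0$ I would note that $d^{j}_{n,m}(1)=0$ whenever $j<n+m$ (since $d^w_{u,v}(1)=0$ unless $w\geq u+v$), so in (\ref{bd}) every summand with $j<n+m$ drops out; at $j=n+m$ the terms $d^{n+m}_{n+1,m}(1)$, $d^{n+m}_{n,m+1}(1)$, $d^{n+m}_{n+1,m+1}(1)$ all vanish for the same reason, leaving $b^{n+m}_{n,m}(1)=d^{n+m}_{n,m}(1)=\binom{n+m}{n}$ by (\ref{dnmord}), which is exactly $(-1)^0\frac{(n+m)!}{n!m!0!}$. For the inductive step I would assume $b^{n+m+k-1}_{n,m}(1)=(-1)^{k-1}\frac{(n+m+k-1)!}{n!m!(k-1)!}$, add the increment above, and factor $(-1)^k\frac{(n+m+k-1)!}{n!m!k!}$ out of the two terms; using $\frac{1}{(k-1)!}=\frac{k}{k!}$ the bracket collapses to $-k+(n+m+2k)=n+m+k$, and multiplying back yields precisely $(-1)^k\frac{(n+m+k)!}{n!m!k!}$. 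Equivalently, the whole computation can be phrased as a telescoping sum: setting $a_k:=(-1)^k\frac{(n+m+k)!}{n!m!k!}$, one checks directly that $a_k-a_{k-1}$ reproduces the increment from (\ref{dddd}) and that $a_0$ matches the $k=0$ value, so that (\ref{bd}) collapses term by term to $b^{n+m+k}_{n,m}(1)=a_k$.

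I do not anticipate a genuine obstacle, since the substantive input (\ref{dddd}) is already derived from Theorem \ref{dknmordthm}: what remains is a single factorial identity and a one-line induction. The only points requiring care are bookkeeping ones, namely correctly locating the lowest nonvanishing codimension so that the recurrence is initialized at $\binom{n+m}{n}$ rather than at a shifted value, and keeping the offset index used in (\ref{dddd}) synchronized with the codimension index $n+m+k$ so that the signs $(-1)^k$ and the arguments of the factorials line up. Once those alignments are fixed, both the base case and the inductive step reduce to routine manipulation.
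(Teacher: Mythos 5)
Your proposal is correct and is exactly the ``inductive argument'' the paper invokes (the paper merely cites (\ref{bd}) and (\ref{dddd}) without writing out the induction): the base case $b^{n+m}_{n,m}(1)=d^{n+m}_{n,m}(1)=\binom{n+m}{n}$ and the telescoping step $(-1)^{k-1}\tfrac{(n+m+k-1)!}{n!m!(k-1)!}+(-1)^k\tfrac{(n+m+k-1)!}{n!m!k!}(n+m+2k)=(-1)^k\tfrac{(n+m+k)!}{n!m!k!}$ both check out. You also correctly resolve the one notational trap, namely that all four $d$-terms in (\ref{dddd}) sit at the same absolute codimension while the sign and factorials are expressed in the offset $j$.
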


\section{Structure constants for $\widehat{SL_2}$ in $T$-equivariant cohomology}

Let $G^{\min}:=\widehat{SL_2}$ be the minimal affine Kac-Moody group associated to $SL_2$.  Let $P$ be the standard maximal parabolic subgroup and let $\mathcal{X}:=G^{\min}/P$ denote the standard affine Grassmannian.  

Let $\{\e_i\}_{i=0}^\infty$ denote the Schubert basis in $T$-equivariant cohomology of $\mathcal{X}$.  Here we use the notation $\e_i:=\e_{w_i}$, where $\e_{w_i}$ is defined as in \cite[Theorem 11.3.9]{Kbook}.  Let $\mathbb{Z}[\alpha_0,\alpha_1]$ denote the graded ring of polynomials with integral coefficients in the simple roots $\alpha_0,\alpha_1$.  Further, let $\mathbb{Z}[\alpha_0,\alpha_1](k)$ denote the $k$-th graded piece of $\mathbb{Z}[\alpha_0,\alpha_1]$.  

We define the $T$-equivariant cohomology structure constants $c^k_{n,m}\in \mathbb{Z}[\alpha_0,\alpha_1](n+m-k)$ (see \cite[Corollary 11.3.17]{Kbook}) by
\begin{equation}\label{cknm}
\e_n\cdot\e_m=\sum_{k=\max\{n,m\}}^{n+m} c^k_{n,m}\e_k.
\end{equation}

By the Chevalley formula \cite[Theorem 11.17 (i)]{Kbook}, and using (\ref{womega}), we compute that
\begin{equation}\label{eqChevalley}
\e_1\cdot\e_m=q_m\e_m+(m+1)\e_{m+1},
\end{equation} where 
\begin{equation}\label{qm}
q_m:=\left\lceil \frac{m}{2}\right\rceil^2\alpha_0+\left(\left\lfloor\frac{m}{2}\right\rfloor^2+\left\lfloor\frac{m}{2}\right\rfloor\right)\alpha_1.
\end{equation}
In particular, $c^m_{1,m}=q_m$ and $c^{m+1}_{1,m}=m+1$.  

\begin{defn}\label{Qdef}
We let $Q^d_{i,j}$ denote the sum of all monomials of degree $d$ in the $j+1$ variables $q_i,q_{i+1},\dots,q_{i+j}$.
\end{defn}

From (\ref{eqChevalley}) and induction, we have 
\begin{equation}\label{eqpowers}
(\e_1)^n\cdot\e_m=\sum_{i=0}^n\frac{(m+i)!}{m!}Q^{n-i}_{m,i}\e_{m+i}.
\end{equation}
Further, we compute that 
\begin{equation}\label{eQe}
(\e_1)^n=\sum_{k=1}^n k!Q^{n-k}_{1,k-1} \e_k,
\end{equation}

Solving for $\e_n$ in equation (\ref{eqpowers}), we have $$\e_n=\frac{1}{n!}\left[(\e_1)^n-\sum_{k=1}^{n-1}k!Q^{n-k}_{1,k-1}\e_k\right].$$  
Assuming now that $n\leq m$, a computation yields $$\e_n\cdot\e_m=\sum_{i=0}^n \frac{1}{n!}\left[\frac{(m+i)!}{m!}Q^{n-i}_{m,i}-\sum_{k=1}^{n-1}k!Q^{n-k}_{1,k-1}c^{m+i}_{k,m}\right]\e_{m+i}.$$
Further, $c^j_{n,m}=0$ whenever $j<\max\{n,m\}$ or $j>n+m$.  Hence we have derived the following recursive formula for the structure constants:  

\begin{prop}\label{scindprop}
For $n\leq m$ and $0\leq i \leq n$,
\begin{equation}\label{scind}
c^{m+i}_{n,m} = \frac{1}{n!}\left[\frac{(m+i)!}{m!}Q^{n-i}_{m,i} - \sum_{k=\max\{i,1\}}^{n-1}k!Q^{n-k}_{1,k-1} c^{m+i}_{k,m}\right],
\end{equation}
where $Q^d_{i,j}$ is defined as in Definition \ref{Qdef}.
\end{prop}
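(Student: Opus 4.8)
The plan is to obtain the recursion by first computing the action of every power $(\e_1)^n$ on a Schubert class, then inverting this relation to express $\e_n$ through $(\e_1)^n$ and lower Schubert classes, and finally comparing coefficients against the defining equation \ref{cknm}. The only nontrivial input is the Chevalley product \ref{eqChevalley}, so the whole argument is essentially bookkeeping with the polynomials $Q^d_{i,j}$ of Definition \ref{Qdef}.

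The first and main step is to prove formula \ref{eqpowers}, $(\e_1)^n\cdot\e_m=\sum_{i=0}^n\frac{(m+i)!}{m!}Q^{n-i}_{m,i}\e_{m+i}$, by induction on $n$; the base case $n=0$ is $\e_m=\e_m$ using $Q^0_{m,0}=1$. For the inductive step I multiply the formula for $(\e_1)^n\cdot\e_m$ on the left by $\e_1$ and expand each term by \ref{eqChevalley} in the form $\e_1\cdot\e_{m+i}=q_{m+i}\e_{m+i}+(m+i+1)\e_{m+i+1}$. Collecting the coefficient of $\e_{m+i}$, the step reduces to the identity $Q^{n+1-i}_{m,i}=q_{m+i}\,Q^{n-i}_{m,i}+Q^{n+1-i}_{m,i-1}$, which holds because every degree-$(n+1-i)$ monomial in $q_m,\dots,q_{m+i}$ either contains a factor $q_{m+i}$, and so contributes to the first term after dividing out that factor, or avoids $q_{m+i}$ entirely, and so is counted by the second term. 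I expect this symmetric-function recursion to be the main obstacle: it is the one place where the precise definition of $Q^d_{i,j}$ must be used, and the index shifts between $(n,i)$ and $(n+1,i-1)$ require care.

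With \ref{eqpowers} in hand, I would derive \ref{eQe} by specializing to $m=0$, using that $\e_0$ is the unit of $H_T^\bullet(\mathcal{X})$ so that $(\e_1)^n\cdot\e_0=(\e_1)^n$. Since $q_0=0$ by \ref{qm}, any monomial involving $q_0$ vanishes; this annihilates the $i=0$ summand and collapses $Q^{n-i}_{0,i}$ to $Q^{n-i}_{1,i-1}$, giving $(\e_1)^n=\sum_{k=1}^n k!\,Q^{n-k}_{1,k-1}\e_k$. Isolating the top term $k=n$, where $Q^0_{1,n-1}=1$, yields $(\e_1)^n=n!\,\e_n+\sum_{k=1}^{n-1}k!\,Q^{n-k}_{1,k-1}\e_k$, and solving for $\e_n$ gives $\e_n=\frac{1}{n!}\bigl[(\e_1)^n-\sum_{k=1}^{n-1}k!\,Q^{n-k}_{1,k-1}\e_k\bigr]$.

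Finally I would multiply this identity by $\e_m$, replace $(\e_1)^n\cdot\e_m$ by \ref{eqpowers} and each $\e_k\cdot\e_m$ by its expansion $\sum_j c^j_{k,m}\e_j$ from \ref{cknm}, and read off the coefficient of $\e_{m+i}$; this is legitimate because the $\e_j$ form a basis of $H_T^\bullet(\mathcal{X})$. The result is $c^{m+i}_{n,m}=\frac{1}{n!}\bigl[\frac{(m+i)!}{m!}Q^{n-i}_{m,i}-\sum_{k=1}^{n-1}k!\,Q^{n-k}_{1,k-1}c^{m+i}_{k,m}\bigr]$. To sharpen the lower summation limit to $k=\max\{i,1\}$, I invoke the vanishing $c^{m+i}_{k,m}=0$ whenever $m+i>k+m$, i.e. whenever $k<i$, so every term with $k<i$ drops out. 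This coefficient extraction is routine once the basis property and the vanishing range of the structure constants are available.
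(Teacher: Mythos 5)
Your proposal is correct and follows essentially the same route as the paper: establish (\ref{eqpowers}) by induction from the Chevalley rule (\ref{eqChevalley}), obtain (\ref{eQe}) so as to solve for $\e_n$ in terms of $(\e_1)^n$ and lower Schubert classes, then multiply by $\e_m$ and compare coefficients of $\e_{m+i}$, using the vanishing $c^{m+i}_{k,m}=0$ for $k<i$ to adjust the summation range. You also correctly identify and verify the key symmetric-function recursion, which is exactly the relation (\ref{Qind}) the paper records.
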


Using the above formula (\ref{scind}), one may induct upwards on $n$ to compute the structure constants.  In addition, closed forms for the structure constants can be obtained inducting downwards on $i$.  

For example, it follows immediately, by letting $i=n$ in (\ref{scind}), that 
\begin{equation}\label{cn+mnm}
c^{n+m}_{n,m}=\binom{n+m}{n}.
\end{equation}
Now to compute a closed form for $c^{n+m-1}_{n,m}$, letting $i=n-1$ in (\ref{scind}) and using (\ref{cn+mnm}) gives
\begin{equation}\label{scind-1}
c^{n+m-1}_{n,m}=\frac{(n+m-1)!}{n!m!}\left[\sum_{j=1}^{n+m-1}q_j-\sum_{j=1}^{m-1}q_j-\sum_{j=1}^{n-1}q_j\right].
\end{equation}
A closed form for $\sum_{j=1}^k q_j$ is given by
\begin{equation}\label{sumq}
\sum_{j=1}^k q_j=
\begin{cases}
\frac{1}{12}k(k+1)(k+2)(\alpha_0+\alpha_1)& k\text{ even}\\
\frac{1}{12}(k+1)(3+2k+k^2)\alpha_0+\frac{1}{12}(k-1)(k+1)(k+3)\alpha_1&k\text{ odd}.
\end{cases}
\end{equation}
Thus, one may verify that (\ref{scind-1}) gives

\[
c^{n+m-1}_{n,m}=
\begin{cases}
\displaystyle\frac{1}{4}\cdot\frac{(n+m)!}{(n-1)!(m-1)!}(\alpha_0+\alpha_1) & n,m\text{ even}\vspace{.1in}\\
\displaystyle\frac{1}{4}\cdot\frac{(n+m)!}{n!m!}\left((1+nm)\alpha_0+(-1+nm)\alpha_1\right) & n,m\text{ odd}\vspace{.1in}\\
\displaystyle\frac{1}{4}\cdot\frac{(n+m-1)!}{(n-1)!m!}\left((-1+nm+m^2)\alpha_0+(1+nm+m^2)\alpha_1\right) & n\text{ even},m\text{ odd}\vspace{.1in}\\
\displaystyle\frac{1}{4}\cdot\frac{(n+m-1)!}{n!(m-1)!}\left((-1+nm+n^2)\alpha_0+(1+nm+n^2)\alpha_1\right) & n\text{ odd},m\text{ even}.
\end{cases}
\]

In general, to obtain a closed form for $c^{n+m-d}_{n,m}$ given closed forms for $c^{n+m}_{n,m},\dots,c^{n+m-d+1}_{n,m}$, one needs closed forms for $Q^1_{i,j},\dots,Q^d_{i,j}$.  It is easy to see that the following recurrence relation holds:
\begin{equation}\label{Qind}
Q^d_{i,j}=Q^{d-1}_{i,j}q_{i+j}+Q^d_{i,j-1}.
\end{equation}
By (\ref{sumq}) we obtain  
\[
Q^1_{i,j}=
\begin{cases}
\frac{1}{12}(3 i^2 + 2 j + 6 i j + 3 i^2 j + 3 j^2 + 3 i j^2 + j^3)\alpha_0\\+\frac{1}{12}(6i+3 i^2 + 2 j + 6 i j + 3 i^2 j + 3 j^2 + 3 i j^2 + j^3)\alpha_1&i,j\text{ even}\\\\
\frac{1}{12}(1+j)(3i+3i^2+2j+3ij+j^2)(\alpha_0+\alpha_1)& i,j\text{ odd}\\\\
\frac{1}{12}(1 + j) (3 + 3 i + 3 i^2 + 2 j + 3 i j + j^2)\alpha_0\\+\frac{1}{12}(1 + j) (-3 + 3 i + 3 i^2 + 2 j + 3 i j + j^2)\alpha_1& i\text{ even},j\text{ odd}\\\\
\frac{1}{12}(3 + 6 i + 3 i^2 + 5 j + 6 i j + 3 i^2 j + 3 j^2 + 3 i j^2 + j^3)\alpha_0\\+\frac{1}{12}(-3 + 3 i^2 - j + 6 i j + 3 i^2 j + 3 j^2 + 3 i j^2 + j^3)\alpha_1& i\text{ odd},j\text{ even}.
\end{cases}
\]

Now using the above and (\ref{Qind}) we obtain a closed form for $Q^2_{i,j}$, although we do not provide it here for the sake of space.  Then, using (\ref{scind}) we derive the following closed form for the structure constants $c^{n+m-2}_{n,m}$:

$$c^{n+m-2}_{n,m}=a^{n+m-2}_{n,m}\left(b^{n+m-2}_{n,m}(0)\alpha_0^2+b^{n+m-2}_{n,m}(1)\alpha_0\alpha_1+b^{n+m-2}_{n,m}(2)\alpha_1^2\right),$$
where
\[
a^{n+m-2}_{n,m}=
\begin{cases}
\displaystyle\frac{1}{8}\cdot\frac{(n+m)!}{(n-1)!(m-1)!}\cdot\frac{1}{n+m-1} & n,m\text{ even}\vspace{.1in}\\
\displaystyle\frac{1}{8}\cdot\frac{(n+m)!}{n!m!}\cdot\frac{(n-1)(m-1)}{n+m-1} & n,m\text{ odd}\vspace{.1in}\\
\displaystyle\frac{1}{8}\cdot\frac{(n+m-1)!}{n!(m-1)!}\cdot(n-1) & n\text{ even},m\text{ odd}\vspace{.1in}\\
\displaystyle\frac{1}{8}\cdot\frac{(n+m-1)!}{(n-1)!m!}\cdot(m-1) & n\text{ odd},m\text{ even}.
\end{cases}
\]

\[
b^{n+m-2}_{n,m}(0)=
\begin{cases}
\displaystyle\frac{1}{4}\left(nm^2+n^2m-n^2-m^2-3nm+4\right) & n,m\text{ even}\vspace{.1in}\\
\displaystyle\frac{1}{4}\left(nm^2+n^2m-nm-1\right) & n,m\text{ odd}\vspace{.1in}\\
\displaystyle\frac{1}{4}\left(nm^2+n^2m-n^2-nm+2n+3\right) & n\text{ even},m\text{ odd}\vspace{.1in}\\
\displaystyle\frac{1}{4}\left(nm^2+n^2m-m^2-nm+2m+3\right) & n\text{ odd},m\text{ even}.
\end{cases}
\]

\[
b^{n+m-2}_{n,m}(1)=
\begin{cases}
\displaystyle\frac{1}{2}\left(nm^2+n^2m-n^2-m^2-3nm+2n+2m-2\right) & n,m\text{ even}\vspace{.1in}\\
\displaystyle\frac{1}{2}\left(nm^2+n^2m-nm-1\right) & n,m\text{ odd}\vspace{.1in}\\
\displaystyle\frac{1}{2}\left(nm^2+n^2m-n^2-nm-1\right) & n\text{ even},m\text{ odd}\vspace{.1in}\\
\displaystyle\frac{1}{2}\left(nm^2+n^2m-m^2-nm-1\right) & n\text{ odd},m\text{ even}.
\end{cases}
\]

\[
b^{n+m-2}_{n,m}(2)=
\begin{cases}
\displaystyle\frac{1}{4}\left(nm^2+n^2m-n^2-m^2-3nm+4n+4m-4\right) & n,m\text{ even}\vspace{.1in}\\
\displaystyle\frac{1}{4}\left(nm^2+n^2m-n^2-nm+3\right) & n,m\text{ odd}\vspace{.1in}\\
\displaystyle\frac{1}{4}\left(nm^2+n^2m-n^2-nm-2n-1\right) & n\text{ even},m\text{ odd}\vspace{.1in}\\
\displaystyle\frac{1}{4}\left(nm^2+n^2m-m^2-nm-2m-1\right) & n\text{ odd},m\text{ even}.
\end{cases}
\]

Lastly, from \cite[Lemma 11.1.10 and Proposition 11.1.11 (1) and (3)]{Kbook} one may verify using induction that for $n\leq m$, a closed form for $c^m_{n,m}$ is given by
\[
c^{m}_{n,m}=
\begin{cases}
\displaystyle\binom{\frac{m+n}{2}}{n}\prod_{i=0}^{n-1}\left(\left(\frac{m-n}{2}+i\right)\alpha_0+\left(\frac{m-n}{2}+1+i\right)\alpha_1\right) & n,m\text{ even}\vspace{.1in}\\
\displaystyle\binom{\frac{m+n}{2}}{n}\prod_{i=0}^{n-1}\left(\left(\frac{m-n}{2}+1+i\right)\alpha_0+\left(\frac{m-n}{2}+i\right)\alpha_1\right) & n,m\text{ odd}\vspace{.1in}\\
\displaystyle\binom{\frac{m+n-1}{2}}{n}\prod_{i=0}^{n-1}\left(\left(\frac{m-n+1}{2}+i\right)\alpha_0+\left(\frac{m-n+1}{2}+1+i\right)\alpha_1\right) & n\text{ odd},m\text{ even}\vspace{.1in}\\
\displaystyle\binom{\frac{m+n-1}{2}}{n}\prod_{i=0}^{n-1}\left(\left(\frac{m-n+1}{2}+1+i\right)\alpha_0+\left(\frac{m-n+1}{2}+i\right)\alpha_1\right) & n\text{ even},m\text{ odd}.
\end{cases}
\]

\end{document}